\documentclass[12pt,centertags,oneside]{amsart}

\usepackage{amscd,amsxtra,calc}
\usepackage{cmmib57}
\usepackage{url}

\usepackage{amscd}
\usepackage{pstricks}
\usepackage{color}
\setcounter{MaxMatrixCols}{25}

\usepackage[a4paper,width=16.2cm,top=3cm,bottom=3cm]{geometry}

\numberwithin{equation}{section}

\setcounter{MaxMatrixCols}{25}

\theoremstyle{plain}
\newtheorem{thm}{Theorem}[section]
\newtheorem{theorem}[thm]{Theorem}

\newtheorem{lemma}[thm]{Lemma}
\newtheorem{corollary}[thm]{Corollary}
\newtheorem{proposition}[thm]{Proposition}
\theoremstyle{definition}
\newtheorem{remark}[thm]{Remark}

\newtheorem{assumption}[thm]{Assumption}

\numberwithin{equation}{section}





\newcommand{\C}{{\mathbb C}}

\renewcommand{\P}{{\mathbb P}}
\newcommand{\Q}{{\mathbb Q}}

\newcommand{\Z}{{\mathbb Z}}

\newcommand{\id}{{\rm id\hspace{.1ex}}}


\title [non-finitely generated automorphism group]{A surface birational to an Enriques surface with non-finitely generated automorphism group}

\author{JongHae Keum}
\address{Korea Institute for Advanced Study, Hoegiro 85, Seoul,
02455, Korea}
\email{jhkeum@kias.re.kr}

\author{Keiji Oguiso}
\address{Mathematical Sciences, the University of Tokyo, Meguro Komaba 3-8-1, Tokyo, Japan; Korea Institute for Advanced Study, Hoegiro 85, Seoul, 02455, Korea; National Center for Theoretical Sciences
No. 1 Sec. 4 Roosevelt Rd., Taipei, 106, Taiwan
}
\email{oguiso@ms.u-tokyo.ac.jp}

\thanks{Keum was supported by the National Research Foundation of Korea (NRF 2019R1A2C3010487).
Oguiso was supported by JSPS Grant-in-Aid (S) 15H05738, JSPS Grant-in-Aid (B) 15H03611, and by KIAS Scholar Program. }

\dedicatory{Dedicated to Professor Shigeru Mukai on the occasion of his 65th birthday}

\begin{document}

\maketitle
\begin{abstract}
We will show that there is a smooth complex projective surface, birational to
some Enriques surface, such that the automorphism group is 
discrete but not finitely generated.
\end{abstract}

\section{Introduction}

We work over the complex number field $\C$. A K3 surface is a compact simply connected, in the classical topology, smooth complex surface with nowhere vanishing global holomorphic $2$-form. An Enriques surface is a smooth complex surface which is isomorphic to a non-trivial \'etale quotient of a K3 surface. The quotient map is necessarily of degree two and every Enriques surface is projective.

Our main theorem is the following:

\begin{theorem}\label{thm1}

There is a smooth projective surface $Y$ birational to some Enriques surface such that ${\rm Aut}\, (Y)$ is not finitely generated.

\end{theorem}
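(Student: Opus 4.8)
The plan is to realize $Y$ as a blow-up of a carefully chosen Enriques surface $S$, to identify $\Aut(Y)$ with a point stabilizer inside $\Aut(S)$, and then to exploit the fact that a stabilizer subgroup of a finitely generated group need not itself be finitely generated. First I would make the structural reduction. Since the Kodaira dimension is a birational invariant, $Y$ has $\kappa(Y)=0$ and is not ruled; as $S$ is then the unique minimal model of $Y$, every birational self-map of $S$ is biregular, and $Y\to S$ must be a blow-up $\sigma\colon Y\to S$ with $0$-dimensional center $Z$ (a finite tower of points, possibly infinitely near). Every $\varphi\in\Aut(Y)$ descends to $\bar\varphi\in\Aut(S)$, the descent $\varphi\mapsto\bar\varphi$ is injective because $\sigma$ is an isomorphism over the dense open set $S\setminus Z$, and $\bar\varphi$ must preserve $Z$; conversely any $g\in\Aut(S)$ preserving the tower $Z$ lifts by the universal property of blow-ups. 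Hence
\[
\Aut(Y) \;\cong\; \mathrm{Stab}_{\Aut(S)}(Z) = \{\, g \in \Aut(S) : g(Z) = Z \,\}.
\]
Moreover $q(Y)=0$ and $Y$ is non-ruled, so $\Aut^{0}(Y)$ is trivial and $\Aut(Y)$ is automatically discrete. Thus the entire theorem reduces to producing a pair $(S,Z)$ for which this stabilizer is not finitely generated.

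This reduction already exposes the tension that makes the result interesting: for every Enriques surface, $\Aut(S)$ is known to be finitely generated, since (in the style of Sterk's theorem for K3 surfaces) the action of $\Aut(S)$ on the numerical lattice $\mathrm{Num}(S)\cong U\oplus E_8(-1)$, of rank $10$ and signature $(1,9)$, admits a rational polyhedral fundamental domain on the nef cone. So finite generation must be destroyed purely by passing to the stabilizer of $Z$, and the burden is to choose $S$ and $Z$ to force exactly this. I would pick an $S$ with infinite automorphism group and an explicit, well-understood configuration of smooth rational ($(-2)$-)curves, controlling $\Aut(S)$ through the representation $\Aut(S)\to\mathrm{O}(\mathrm{Num}(S))$ (faithful up to a finite kernel). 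Fixing an $f\in\Aut(S)$ of infinite order, one obtains an infinite $f$-orbit of $(-2)$-curves, or of distinguished points such as intersection points of nodal curves, forming an infinite chain $\{C_i\}_{i\in\Z}$; the center $Z$ is then placed at a location sufficiently special relative to this chain that only very constrained elements of $\Aut(S)$ fix it, yet not so special that $Z$ becomes $\Aut(S)$-invariant.

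To prove non-finite generation of $\mathrm{Stab}_{\Aut(S)}(Z)=\Aut(Y)$, the cleanest route is to exhibit a surjective homomorphism onto a group that is manifestly not finitely generated, for instance an infinite direct sum $\bigoplus_{i}\Z/2\Z$ or $\Z^{(\infty)}$; since every quotient of a finitely generated group is finitely generated, such a surjection forces the conclusion. Geometrically I would build this homomorphism from the infinite family $\{C_i\}$ (and its strict transform on $Y$): the stabilizer preserves the family, fixes each member, and acts on each $C_i\cong\P^1$, so restriction yields a map $\Aut(Y)\to\prod_i\Aut\!\big(C_i,\ \text{marked points}\big)$ recording, for each $i$, a local $\Z/2$- (or $\Z$-)valued invariant of the induced automorphism, such as whether it exchanges two marked intersection points on $C_i$. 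The content to be verified is that these local invariants vary \emph{independently} and that each finite combination is \emph{actually realized} by an automorphism of $S$ fixing $Z$, so that the image is a full infinite direct sum rather than a finite quotient. (Equivalently, one may present $\Aut(Y)$ as a strictly increasing union of stabilizers with no finite generating set, but the quotient argument is the robust way to certify this.)

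The main obstacle will be threading the two opposing constraints on $Z$: the stabilizer must be \emph{large} enough to surject onto an infinitely generated group, so $Z$ cannot be generic, yet it must \emph{not} contain the infinite-order engine that would make it commensurable with the finitely generated $\Aut(S)$, so $Z$ cannot be invariant. Concretely, this is an exact determination of which $g\in\Aut(S)$ preserve the full infinitely-near tower $Z$, which I expect to be a lattice-theoretic computation inside $\mathrm{O}(\mathrm{Num}(S))$ combined with a transcendental/geometric argument pinning down the actual points of $Z$ and the marked points on the curves $C_i$. Establishing the independence and realizability of the local invariants — i.e. producing, for each finite pattern, an honest automorphism of $S$ that fixes $Z$ and induces precisely that pattern — is where the real work lies; once that is done, discreteness being already in hand, the theorem follows.
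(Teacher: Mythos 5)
Your structural reduction is correct and is essentially the same as the paper's first step: $Y$ must be a blow-up $\sigma\colon Y\to S$ of its unique minimal model, $\Aut(Y)$ is discrete and identifies with the stabilizer in $\Aut(S)$ of the (possibly infinitely near) center, and since $\Aut(S)$ itself is always finitely generated, non-finite generation must be certified by a surjection of that stabilizer onto a manifestly non-finitely generated abelian group. The paper does exactly this, via a homomorphism $\rho\colon \mathrm{Ine}(Z,Q_{32},T_{Q_{32}})\to(\C,+)$. But everything after the reduction --- which is the actual content of the theorem --- is missing from your proposal, and the mechanism you sketch in its place is doubtful. Concretely: (i) you never specify $(S,Z)$; (ii) your proposed surjection onto $\bigoplus_{i}\Z/2\Z$ via ``independent local exchange invariants'' on an infinite chain $\{C_i\}$ of $(-2)$-curves is unsupported at its two load-bearing points. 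An element of the stabilizer of the blow-up center has no reason to fix each member of an infinite orbit of curves (generically it permutes the chain), and realizing every finite pattern of exchanges by an automorphism fixing the center would require infinitely many essentially independent involutions inside a single point stabilizer --- you give no candidate source for these, and nothing in the structure of $\Aut$ of an Enriques surface suggests they exist. In the known examples of this phenomenon (Lesieutre, Dinh--Oguiso, Oguiso, and this paper) the non-finitely generated group is not a direct sum of involutions but a subgroup of $(\C,+)$ produced by a conjugation trick.

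The paper's actual engine, absent from your outline, is the following. On Mukai's Enriques surface $Z=\mathrm{Km}(E\times F)/\langle\epsilon\rangle$, with $t$, $s$ transcendental and $E$, $F$ non-isogenous, one blows up the single point $Q_{32}=\infty\in H_2$ \emph{and then three points on the exceptional curve} $\P(T_{Z,Q_{32}})$; this second round of blow-ups --- a device your sketch lacks --- is what forces every automorphism of $Y$ to act on $T_{Z,Q_{32}}$ by a scalar, and finiteness of the bicanonical representation (Ueno) pins that scalar to finite order, so that $\Aut(Y)$ is commensurable with the inertia group $\mathrm{Ine}(Z,Q_{32},T_{Q_{32}})$. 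Each element of this group restricts to a translation $x\mapsto x+c$ on $H_2\simeq\P^1$, giving $\rho$. The image is then shown to be non-finitely generated by producing two explicit automorphisms from genus-one fibrations on $Z$: a Mordell--Weil translation $f_1$ coming from the Jacobian of the $IV^*$ fibration (a rational elliptic surface whose narrow Mordell--Weil lattice is $A_2$), with $\rho(f_1)=a\neq 0$; and an automorphism $f_2$ from the Mordell--Weil group of the Jacobian of the $I_8$ fibration (constructed via Kondo and Hulek--Sch\"utt as a quotient $X/\langle\iota\rangle$), which acts on $H_2$ by $x\mapsto tx$. Then $(f_2^2)^{-n}\circ f_1\circ(f_2^2)^{n}$ lies in the inertia group and maps under $\rho$ to $t^{-2n}a$, and transcendence of $t$ makes the additive group generated by $\{t^{-2n}a\}$ non-finitely generated. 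Without analogues of $f_1$ and $f_2$ --- a translation and a ``multiplier'' whose conjugates sweep out a transcendental orbit --- your proposal records where the difficulty lies but does not resolve it.
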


\begin{remark}\label{rem1} Let $Y$ be a smooth projective surface birational to an Enriques surface $S$ and let $\tilde{S}$ be the universal covering K3 surface of $S$.
\begin{enumerate}
\item ${\rm Aut}^0(S) = \{\id_S\}$, i.e., ${\rm Aut}\,(S)$ is discrete. This is because $H^0(S, T_{S}) = 0$ by $H^0(\tilde{S}, T_{\tilde{S}}) = 0$. On the other hand, ${\rm Aut}\, (S)$ itself is finitely generated. This is because, "up to finite kernel and cokernel", ${\rm Aut}\, (S)$ is isomorphic to the quotient group ${\rm O}({\rm NS}\, (S)/{\rm torsion})/W(S)$ of the arithmetic subgroup ${\rm O}({\rm NS}\, (S)/{\rm torsion})$ by the Weyl group $W(S)$ generated by the reflections corresponding to the smooth rational curves on $S$ (see \cite[Theorem]{Do84} for a more precise statement) and ${\rm O}({\rm NS}\, (S)/{\rm torsion})$ is finitely generated by a general result on arithmetic subgroups of linear algebraic groups \cite[Theorem 6.12]{BH62} (See also Theorem \ref{thm11}). So, $S$ itself is not a candidate surface in Theorem \ref{thm1}.
\item $S$ is the unique minimal model of $Y$ up to isomorphisms. So, we have a birational morphism $\nu : Y \to S$, which is a finite composition of blowings up at points. Therefore, we have $H^0(Y, T_Y) = 0$ and also an injective group homomorphism
$${\rm Aut}\, (Y) \subset {\rm Bir}\, (S) = {\rm Aut}\, (S)\,\, ; \,\, f \mapsto \nu \circ f \circ \nu^{-1}\,\, ,$$
via $\nu$. Note that a subgroup of a finitely generated group is not necessarily finitely generated (cf. Theorem \ref{thm11}).
\end{enumerate}
\end{remark}

We show Theorem \ref{thm1} by constructing $Y$ explicitly. Our construction is inspired by \cite{Le18} for 6-dimensional examples, \cite{DO19} and \cite{Og19} for exmaples birational to K3 surfaces and also \cite{Mu10} for his new construction of an Enriques surface with a numerically trivial involution, which is missed in an earlier paper \cite{MN84}. As usual in study of Enriques surfaces, our construction is more involved than \cite{DO19} and \cite{Og19} for K3 surfaces, whereas the basic strategy of the construction is essentially the same.

As in \cite{DO19} and \cite{Og19}, the following purely group theoretical theorem (see eg. \cite{Su82}) will be frequently used in this paper.

\begin{thm}\label{thm11}
Let $G$ be a group and $H \subset G$ a subgroup of $G$. Assume that $H$ is of finite index, i.e., $[G :H] < \infty$. Then, the group $H$
 is finitely generated if and only if $G$ is finitely generated.
\end{thm}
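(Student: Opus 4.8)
The plan is to prove the two implications separately; only the forward direction requires real work. I would first dispose of the easy direction: suppose $H$ is finitely generated, say $H = \langle h_1, \ldots, h_m\rangle$, and choose right coset representatives $g_1, \ldots, g_n$, where $n = [G:H] < \infty$, so that $G = \bigsqcup_{i=1}^{n} H g_i$. Any $g \in G$ lies in a unique coset $H g_i$ and hence can be written $g = h g_i$ with $h \in H$; expanding $h$ as a word in the $h_j$ shows that $G$ is generated by the finite set $\{h_1, \ldots, h_m, g_1, \ldots, g_n\}$. Finiteness of the index is used here only to guarantee finitely many representatives.

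The substance is the converse: assuming $G$ is finitely generated I must show the same for $H$. Here I would run the Reidemeister--Schreier rewriting process. Fix a finite symmetric generating set $S = S^{-1}$ of $G$ (replace a finite generating set by its union with the set of inverses) and a right transversal $T$ for $H$ with $1 \in T$; for $g \in G$ write $\overline{g}$ for the unique element of $T$ with $Hg = H\overline{g}$. The candidate is the \emph{Schreier set} $U = \{\, t s\, \overline{ts}^{\,-1} : t \in T,\ s \in S\,\}$, which is finite because $|T| = n$ and $S$ is finite, and every element of $U$ lies in $H$ since $H(ts) = H\overline{ts}$ by construction.

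The key step, and the main obstacle, is proving that $U$ actually generates $H$. Given $h \in H$, write $h = s_1 s_2 \cdots s_k$ with $s_i \in S$, and set $t_0 = 1$ together with $t_i = \overline{s_1 \cdots s_i}$. The crux is the telescoping identity
\[
h = \prod_{i=1}^{k}\bigl(t_{i-1}\, s_i\, t_i^{-1}\bigr),
\]
which holds because the interior factors $t_i^{-1} t_i$ cancel and because $t_0 = 1$ while $t_k = \overline{h} = 1$ (as $h \in H$). It then remains to recognize each factor as an element of $U$: since $H t_{i-1} = H(s_1 \cdots s_{i-1})$, one gets $t_i = \overline{t_{i-1} s_i}$, whence $t_{i-1} s_i t_i^{-1} = t_{i-1} s_i\, \overline{t_{i-1} s_i}^{\,-1} \in U$. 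Thus $h$ is a product of elements of $U$, so $H = \langle U\rangle$ is finitely generated. The combinatorial heart is this rewriting, and the care needed is precisely in verifying $t_i = \overline{t_{i-1}s_i}$ (so the factors genuinely land in $U$) and that both endpoints $t_0, t_k$ reduce to the identity.
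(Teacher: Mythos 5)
Your proof is correct, but note that the paper offers no argument of its own for this statement: it is quoted as a standard group-theoretic fact with a pointer to Suzuki's \emph{Group Theory I} \cite{Su82}, since all the paper needs is to invoke it as a black box (in Remark \ref{rem1} and in the proofs of Propositions \ref{prop31} and \ref{prop32}). What you have supplied is the classical Schreier-lemma argument that underlies that citation. Your easy direction is fine, and in the converse all the delicate points are handled correctly: the normalization $1 \in T$ is what forces $t_k = \overline{h} = 1$ for $h \in H$, the identity $t_i = \overline{t_{i-1}s_i}$ follows as you say from $Ht_{i-1} = H(s_1 \cdots s_{i-1})$, and you verify both inclusions needed for $H = \langle U \rangle$ (each Schreier element lies in $H$, and each $h \in H$ telescopes into a product of them). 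Your route in fact gives slightly more than the paper uses, namely the quantitative Schreier bound: $H$ is generated by at most $[G:H]\cdot|S|$ elements. The only trade-off is length; the paper's citation-only treatment is appropriate there because the theorem is classical and tangential to the geometry, while your self-contained rewriting argument is exactly what a complete proof requires.
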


In this paper, for a variety $V$ we denote
the group of biregular automorphisms of $V$ and the group of birational automorphisms of $V$ by
$${\rm Aut}\, (V),\,\, \,\, {\rm Bir}\, (V)$$
respectively, and for closed subsets  $W_1$, $W_2$, $\ldots$, $W_n$ of $V$ the decomposition group and the inertia group by
$${\rm Dec}\, (W_1, \ldots, W_n) := {\rm Dec}\, (V, W_1, \ldots, W_n) := \{ f \in {\rm Aut}\,(V)\,\, |\,\, f(W_i) = W_i (\forall i)\}\,\, ,$$
$${\rm Ine}\, (W_1, \ldots, W_n) := {\rm Ine}\, (V, W_1, \ldots, W_n) := \{ f \in {\rm Dec}\, (V, W_1, \ldots, W_n)\, |\, f_{W_i} = \id_{W_i} (\forall i)\}.$$
For basic properties of surfaces, we refer to \cite{BHPV04} and \cite{CD89}.

We believe that large part of our construction should work also in positive characteristic $\ge 3$ if the based field is carefully chosen (see e.g. for some sensitive aspect of the base field in positive characteristic \cite{Og19}). We leave it to the readers who are interested in this generalization.

\medskip\noindent
{\bf Acknowledgements.} We would like to thank Professor Jun-Muk Hwang for organizing one day workshop at KIAS,
which made our collaboration possible, and Professor Yuya Matsumoto for very kind help concerning Figure \ref{fig1}.

\section{Preliminaries}\label{sect2}

In this section, first we fix some basic notation concerning a Kummer surface ${\rm Km}\, (E \times F)$ of the product of two non-isogenous elliptic curves. Our notation follows \cite{DO19} and \cite{Og19}. Then we recall Mukai's construction of Enriques surfaces with a numerically trivial involution of odd type \cite{Mu10} arising from ${\rm Km}\, (E \times F)$. His construction is very crucial in our construction in Section \ref{sect3}.

\subsection{Kummer surfaces of product type}\label{sub1}

Let $E$ be the elliptic curve defined by the Weierstrass equation
$$y^2 = x(x-1)(x-t)\,\, ,$$
and $F$ be the elliptic curve defined by the Weierstrass equation
$$v^2 = u(u-1)(u-s)\,\, .$$
Note that $E/\langle -1_E \rangle = \P^1$, the associated quotient map $E \to \P^1$ is given by $(x,y)\mapsto x$ and the points $0$, $1$, $t$ and $\infty$ of $\P^1$ are exactly the branch points of this quotient map. The same holds for $F$ if we replace $t$ by $s$.

Throughout this paper, we make the following assumption:

\begin{assumption}\label{ass21} $t$ and $s$ are transcendental over $\Q$ and the two elliptic curves
$E$ and $F$ are not isogenous.
\end{assumption}
Assumption \ref{ass21} is satisfied if $s \in \C$ is generic with respect to a transcendental number $t \in \C$.

Let
$$X := {\rm Km} (E \times F)$$
be the Kummer K3 surface accociated to the product abelian surface $E \times F$, that is, the minimal resolution of the quotient surface $E \times F/\langle -1_{E\times F} \rangle$. We write $H^0(X, \Omega_X^2) = \C \omega_X$. Since $E$ and $F$ are not isogenous, the Picard number $\rho(X)$ of $X$ is $18$ (See eg.\cite[Prop. 1 and Appendix]{Sh75}).

Let $\{a_i\}_{i=1}^{4}$ and $\{b_i\}_{i=1}^{4}$ be the $2$-torsion subgroups of $F$ and $E$ respectively. Then $X$ contains 24 smooth rational curves which form the so called double Kummer pencil on $X$, as in Figure \ref{fig1}. Here smooth rational curves $E_i$, $F_i$ ($1 \le i \le 4$) are arising from the elliptic curves $E \times \{a_i\}$, $\{b_i\} \times F$ on $E \times F$. Smooth rational curves $C_{ij}$ ($1\le i,j \le 4$) are the exceptional curves over the $A_1$-singular points of the quotient surface $E \times F/\langle -1_{E\times F} \rangle$. Throughout this paper, we will freely use the names of curves in Figure \ref{fig1}.

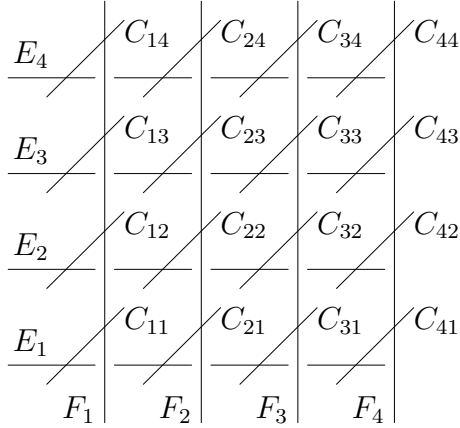
\begin{figure}
\unitlength 0.1in
\begin{picture}(25.000000,24.000000)(-1.000000,-23.500000)
\put(4.500000, -22.000000){\makebox(0,0)[rb]{$F_1$}}%
\put(9.500000, -22.000000){\makebox(0,0)[rb]{$F_2$}}%
\put(14.500000, -22.000000){\makebox(0,0)[rb]{$F_3$}}%
\put(19.500000, -22.000000){\makebox(0,0)[rb]{$F_4$}}%
\put(0.250000, -18.500000){\makebox(0,0)[lb]{$E_1$}}%
\put(0.250000, -13.500000){\makebox(0,0)[lb]{$E_2$}}%
\put(0.250000, -8.500000){\makebox(0,0)[lb]{$E_3$}}%
\put(0.250000, -3.500000){\makebox(0,0)[lb]{$E_4$}}%
\put(6.000000, -16.000000){\makebox(0,0)[lt]{$C_{11}$}}%
\put(6.000000, -11.000000){\makebox(0,0)[lt]{$C_{12}$}}%
\put(6.000000, -6.000000){\makebox(0,0)[lt]{$C_{13}$}}%
\put(6.000000, -1.000000){\makebox(0,0)[lt]{$C_{14}$}}%
\put(11.000000, -16.000000){\makebox(0,0)[lt]{$C_{21}$}}%
\put(11.000000, -11.000000){\makebox(0,0)[lt]{$C_{22}$}}%
\put(11.000000, -6.000000){\makebox(0,0)[lt]{$C_{23}$}}%
\put(11.000000, -1.000000){\makebox(0,0)[lt]{$C_{24}$}}%
\put(16.000000, -16.000000){\makebox(0,0)[lt]{$C_{31}$}}%
\put(16.000000, -11.000000){\makebox(0,0)[lt]{$C_{32}$}}%
\put(16.000000, -6.000000){\makebox(0,0)[lt]{$C_{33}$}}%
\put(16.000000, -1.000000){\makebox(0,0)[lt]{$C_{34}$}}%
\put(21.000000, -16.000000){\makebox(0,0)[lt]{$C_{41}$}}%
\put(21.000000, -11.000000){\makebox(0,0)[lt]{$C_{42}$}}%
\put(21.000000, -6.000000){\makebox(0,0)[lt]{$C_{43}$}}%
\put(21.000000, -1.000000){\makebox(0,0)[lt]{$C_{44}$}}%
\special{pa 500 2200}%
\special{pa 500 0}%
\special{fp}%
\special{pa 1000 2200}%
\special{pa 1000 0}%
\special{fp}%
\special{pa 1500 2200}%
\special{pa 1500 0}%
\special{fp}%
\special{pa 2000 2200}%
\special{pa 2000 0}%
\special{fp}%
\special{pa 0 1900}%
\special{pa 450 1900}%
\special{fp}%
\special{pa 550 1900}%
\special{pa 950 1900}%
\special{fp}%
\special{pa 1050 1900}%
\special{pa 1450 1900}%
\special{fp}%
\special{pa 1550 1900}%
\special{pa 1950 1900}%
\special{fp}%
\special{pa 0 1400}%
\special{pa 450 1400}%
\special{fp}%
\special{pa 550 1400}%
\special{pa 950 1400}%
\special{fp}%
\special{pa 1050 1400}%
\special{pa 1450 1400}%
\special{fp}%
\special{pa 1550 1400}%
\special{pa 1950 1400}%
\special{fp}%
\special{pa 0 900}%
\special{pa 450 900}%
\special{fp}%
\special{pa 550 900}%
\special{pa 950 900}%
\special{fp}%
\special{pa 1050 900}%
\special{pa 1450 900}%
\special{fp}%
\special{pa 1550 900}%
\special{pa 1950 900}%
\special{fp}%
\special{pa 0 400}%
\special{pa 450 400}%
\special{fp}%
\special{pa 550 400}%
\special{pa 950 400}%
\special{fp}%
\special{pa 1050 400}%
\special{pa 1450 400}%
\special{fp}%
\special{pa 1550 400}%
\special{pa 1950 400}%
\special{fp}%
\special{pa 200 2000}%
\special{pa 600 1600}%
\special{fp}%
\special{pa 200 1500}%
\special{pa 600 1100}%
\special{fp}%
\special{pa 200 1000}%
\special{pa 600 600}%
\special{fp}%
\special{pa 200 500}%
\special{pa 600 100}%
\special{fp}%
\special{pa 700 2000}%
\special{pa 1100 1600}%
\special{fp}%
\special{pa 700 1500}%
\special{pa 1100 1100}%
\special{fp}%
\special{pa 700 1000}%
\special{pa 1100 600}%
\special{fp}%
\special{pa 700 500}%
\special{pa 1100 100}%
\special{fp}%
\special{pa 1200 2000}%
\special{pa 1600 1600}%
\special{fp}%
\special{pa 1200 1500}%
\special{pa 1600 1100}%
\special{fp}%
\special{pa 1200 1000}%
\special{pa 1600 600}%
\special{fp}%
\special{pa 1200 500}%
\special{pa 1600 100}%
\special{fp}%
\special{pa 1700 2000}%
\special{pa 2100 1600}%
\special{fp}%
\special{pa 1700 1500}%
\special{pa 2100 1100}%
\special{fp}%
\special{pa 1700 1000}%
\special{pa 2100 600}%
\special{fp}%
\special{pa 1700 500}%
\special{pa 2100 100}%
\special{fp}%
\end{picture}%
 \caption{Curves $E_i$, $F_j$ and $C_{ij}$}
 \label{fig1}
\end{figure}

We denote the unique point $E_j \cap C_{ij}$ by $P_{ij}$ and the unique point $F_i \cap C_{ij}$ by $P_{ij}'$. We may and do adapt $x$ (resp. $u$) the affine coordinate of $E_j$ and $F_i$ so that
$$P_{1j} = 1\,\, ,\,\, P_{2j} = t\,\, ,\,\, P_{3j} = \infty\,\, ,\,\, P_{4j} = 0$$
on $E_j$ with respect to the coordinate $x$ and
$$P_{i1}' = 1\,\, ,\,\, P_{i2}' = s\,\, ,\,\, P_{i3}' = \infty\,\, ,\,\,
P_{i4}' = 0$$
on $F_i$ with respect to the coordinate $u$.

Set
$$\theta := [(1_E, -1_F)] = [(-1_E, 1_F)] \in {\rm Aut}\, (X)\,\, .$$
Then $\theta$ is an involution of $X$, i.e., an automorphism of $X$ of order $2$. The following lemma was proved in \cite[Lemmas (1.3), (1.4)]{Og89} (See also \cite{Og19}).

\begin{lemma}\label{lem21}
\begin{enumerate}
\item $\theta^* = \id$ on ${\rm Pic}\, (X)$ and $\theta^* \omega_X = -\omega_X$.\item $f \circ \theta = \theta \circ f$ for all $f \in {\rm Aut}\,(X)$.
\item Let $X^{\theta}$ be the fixed locus of $\theta$. Then $X^{\theta} = \cup_{i=1}^{4} (E_i \cup F_i)$.
\item ${\rm Aut}\,(X) = {\rm Dec}\, (X, \cup_{i=1}^{4} (E_i \cup F_i))$.
\end{enumerate}
\end{lemma}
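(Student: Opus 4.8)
The plan is to prove the four assertions in the order (1), (3), (2), (4), since the proof of (2) uses the lattice information produced by (1) and (3), and (4) is then formal.

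For (1) I would first treat the $2$-form. Writing $\omega_X$ as the descent of the invariant form $\tfrac{dx}{y}\wedge\tfrac{du}{v}$ on $E\times F$, the representative $(1_E,-1_F)$ of $\theta$ fixes $\tfrac{dx}{y}$ and sends $\tfrac{du}{v}\mapsto-\tfrac{du}{v}$ (because $-1_F\colon(u,v)\mapsto(u,-v)$), so $\theta^*\omega_X=-\omega_X$. For the action on $\Pic(X)$, I would check that $\theta$ preserves each of the $24$ curves of the double Kummer pencil: $(1_E,-1_F)$ fixes every $2$-torsion point of $F$, so it preserves each $E\times\{a_i\}$ and fixes each $E_i$; using $\theta=[(-1_E,1_F)]$ it preserves each $\{b_i\}\times F$ and fixes each $F_i$; and since every $2$-torsion point of $E\times F$ is fixed by $\pm1$, $\theta$ preserves each $C_{ij}$. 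As these $24$ classes span ${\rm NS}(X)\otimes\Q$ (recall $\rho(X)=18$) and $\theta^*$ fixes each of them, $\theta^*=\id$ on ${\rm NS}(X)\otimes\Q$, hence on the torsion-free group $\Pic(X)={\rm NS}(X)$.

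The main work is (3). The inclusion $\supseteq$ is immediate from the computation above: $(1_E,-1_F)$ restricts to the identity on each $E\times\{a_i\}$ and $(-1_E,1_F)$ to the identity on each $\{b_i\}\times F$, so $\theta$ fixes $E_i$ and $F_i$ pointwise. For $\subseteq$ I would use that $\theta$ is anti-symplectic by (1): at a fixed point $p$ the involution $d\theta_p$ of $T_pX$ satisfies $\det d\theta_p=-1$ (since $\theta^*$ acts by $-1$ on $\bigwedge^2T_p^*X\ni\omega_X(p)$), so its eigenvalues are $+1,-1$ and $X^{\theta}$ is a smooth curve of pure dimension one. Over the smooth locus of $Y_0:=(E\times F)/\langle-1\rangle$ the map $X\to Y_0$ is an isomorphism under which $\theta$ becomes the involution induced by $(1_E,-1_F)$, whose fixed points there are exactly the images of $\bigcup_iE\times\{a_i\}$ and $\bigcup_i\{b_i\}\times F$; pulling back gives $X^{\theta}\cap\bigl(X\setminus\bigcup C_{ij}\bigr)=\bigcup_i(E_i\cup F_i)\setminus\{P_{ij},P_{ij}'\}$. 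It remains to rule out $C_{ij}\subset X^{\theta}$: since $E_j$ and $C_{ij}$ are smooth and meet transversally at $P_{ij}$, having both inside the fixed locus would force a node of $X^{\theta}$ at $P_{ij}$, contradicting smoothness; hence $\theta|_{C_{ij}}$ is a nontrivial involution of $\P^1$ whose two fixed points are precisely $P_{ij}\in E_j$ and $P_{ij}'\in F_i$, already accounted for. Combining the two parts gives $X^{\theta}=\bigcup_{i=1}^4(E_i\cup F_i)$. I expect this local analysis along the $C_{ij}$ — showing $\theta$ does not fix them pointwise — to be the main obstacle.

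For (2) I would pass to the transcendental lattice $T(X)$, of rank $22-\rho(X)=4$. Every $f\in\Aut(X)$ acts on $H^2(X,\Z)$ by a Hodge isometry, hence preserves both $T(X)$ and ${\rm NS}(X)=T(X)^{\perp}$. By Assumption \ref{ass21} the Hodge structure $T(X)$ has endomorphism algebra $\Q$, so any Hodge isometry of $T(X)$ is $\pm\id$; as $\theta^*\omega_X=-\omega_X$ by (1), we get $\theta^*=-\id$ on $T(X)$. Since $\theta^*=\id$ on ${\rm NS}(X)$ by (1) and $\theta^*=-\id$ is a central scalar on $T(X)$, for any $f$ we obtain $(f\theta f^{-1})^*=f^*\theta^*(f^*)^{-1}=\theta^*$ on $H^2(X,\Z)$; the injectivity of $\Aut(X)\to{\rm O}(H^2(X,\Z))$ for K3 surfaces then forces $f\theta f^{-1}=\theta$, i.e. $f\circ\theta=\theta\circ f$. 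Finally (4) is formal: for $f\in\Aut(X)$, (2) gives $f\theta f^{-1}=\theta$, so $f(X^{\theta})=X^{f\theta f^{-1}}=X^{\theta}$, and by (3) this means $f$ preserves $\bigcup_{i=1}^4(E_i\cup F_i)$; hence $\Aut(X)\subseteq{\rm Dec}\,(X,\bigcup_{i=1}^4(E_i\cup F_i))$, the reverse inclusion being trivial.
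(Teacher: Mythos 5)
Your proof is correct and is essentially the argument behind the paper's own treatment: the paper does not prove Lemma \ref{lem21} inline but cites \cite[Lemmas (1.3), (1.4)]{Og89}, where the same ingredients appear --- triviality of $\theta^*$ on ${\rm NS}(X)$ via the $24$ curves of the double Kummer pencil, $\theta^* = -\id$ on the rank-$4$ transcendental lattice by the genericity of Assumption \ref{ass21} ($E$, $F$ non-isogenous and without CM since $t$, $s$ are transcendental), faithfulness of the action of ${\rm Aut}\,(X)$ on $H^2(X,\Z)$ together with the orthogonal decomposition to force $f \circ \theta \circ f^{-1} = \theta$, and the anti-symplectic local analysis identifying the fixed curve and ruling out the $C_{ij}$. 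The only blemish is notational: $(f\circ\theta\circ f^{-1})^* = (f^*)^{-1}\circ\theta^*\circ f^*$ rather than $f^*\circ\theta^*\circ(f^*)^{-1}$, but since $\theta^*$ acts as a scalar on each of ${\rm NS}(X)$ and $T(X)$, both conjugates equal $\theta^*$ and your conclusion stands.
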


\subsection{Enriques surfaces with a numerically trivial involution of odd type}\label{sub2}

We employ the same notation as in Subsection \ref{sub1}. By Assumption \ref{ass21}, the two ordered sets
$$\{P_{i1}', P_{i2}', P_{i3}', P_{i4}'\}\subset F_i\cong \P^1\,\, ,\,\, \{P_{1j}, P_{2j}, P_{3j}, P_{4j}\} \subset E_j\cong \P^1$$
are not projectively equivalent, i.e., not in the same orbit of the action of ${\rm Aut}\, (\P^1) = {\rm PGL}\, (2, \C)$ on $\P^1$.

We recall the construction of 
 Mukai \cite{Mu10} for our $X = {\rm Km}\, (E \times F)$.
Let $$T := X/\langle \theta \rangle$$ be the quotient surface and $$q : X \to T$$ be the quotient morphism. Then $T$ is a smooth projective surface such that $q(C_{ij})$ ($1 \le i, j \le 4$) is a $(-1)$-curve, i.e., a smooth rational curve with self interesection number $-1$. Then $T$ is obtained by the blowings up of $\P^1 \times \P^1$ at the 16 points $p_{ij}$ ($1 \le i, j \le 4$) of $\P^1 \times \P^1$. We may assume that $p_{ij}$ is the image of $C_{ij}$ under the composite morphism
$$X \to T \to \P^1 \times \P^1\,\, .$$
Let us consider the Segre embedding
$$\P^1 \times \P^1 \subset \P^3\,\, ,$$
and identify $\P^1 \times \P^1$ with a smooth quadric surface
$Q$ in $\P^3$. Since the four points $p_{11}, p_{22}, p_{33}, p_{44} \in Q$  are not coplaner in $\P^3$, we may adjust coordinates $[x_1 : x_2 : x_3 : x_4]$ of $\P^3$ so that the $4$ points are
$$p_{11}=[1:0:0:0],\,\, \, p_{22}= [0:1:0:0],\,\, \, p_{33}=[0:0:1:0],\,\, \, p_{44}=[0:0:0:1]\,\, .$$
Then the equation of $Q$ is of the form
$$\alpha_1x_2x_3 + \alpha_2x_1x_3 + \alpha_3x_1x_2 + (x_1+x_2+x_3)x_4 = 0$$
for some complex numbers $\alpha_i$ satisfying non-degeneracy condition.
Then the Cremona involution of $\P^3$
$$\tilde{\tau}' : [x_1 :x_2 : x_3 : x_4] \mapsto [\frac{\alpha_1}{x_1} : \frac{\alpha_2}{x_2} : \frac{\alpha_3}{x_3} : \frac{\alpha_1\alpha_2\alpha_3}{x_4}]$$ satisfies
 $\tilde{\tau}'(Q) = Q$, hence induces a birational automorphism of $Q$
$$\tau' := \tilde{\tau}'|_{Q} \in {\rm Bir}\, (Q)\,\, .$$
Let $I(\tau')$ be the indeterminacy locus of $\tau'$.
By the definition of $\tau'$, we readily check the following (\cite[Section 2]{Mu10}):
\begin{lemma}\label{lem22}
\begin{enumerate}
\item $I(\tau') = \{p_{ii}\}_{i=1}^{4}$ and $\tau'$ contracts the (smooth) conic curve $C'_{i} := Q \cap (x_i= 0)$ to $p_{ii}$.
\item $\tau'$ interchanges the two lines through $p_{ii}$ for each $i=1$, $2$, $3$, $4$.
\item $\mu^{-1}\circ \tau' \circ \mu \in {\rm Aut}\, (B)$, where $\mu : B \to \P^1 \times \P^1$ is the blowing up at the four points $p_{ii}$ ($1 \le i \le 4$).
\end{enumerate}
\end{lemma}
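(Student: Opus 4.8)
The plan is to verify the three assertions by direct computation with the explicit formula for $\tilde{\tau}'$, the only substantive point being part (3). Throughout I would clear denominators and work with the equivalent polynomial form
\[
\tilde{\tau}'([x_1:x_2:x_3:x_4]) = [\alpha_1 x_2x_3x_4 : \alpha_2 x_1x_3x_4 : \alpha_3 x_1x_2x_4 : \alpha_1\alpha_2\alpha_3\, x_1x_2x_3],
\]
which exhibits $\tilde{\tau}'$ as a standard quadratic Cremona involution of $\P^3$ up to rescaling the coordinates. Its base locus is the union of the six edges $\{x_i=x_j=0\}$ of the coordinate tetrahedron, and substituting $x_i=0$ shows it contracts each face $\{x_i=0\}$ to the opposite vertex $p_{ii}$. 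For part (1), I would first intersect the base locus with $Q$: on each edge $\{x_i=x_j=0\}$ the equation of $Q$ restricts to a single nonzero monomial, so the edge meets $Q$ only in the two vertices it joins, and hence $I(\tau')\subseteq\{p_{11},p_{22},p_{33},p_{44}\}$. Substituting $x_i=0$ into the polynomial form shows directly that every point of $C_i'=Q\cap\{x_i=0\}$ with $\prod_k x_k\neq 0$ is sent to $p_{ii}$, i.e. $\tau'$ contracts $C_i'$ to $p_{ii}$. Since $\tau'$ is an involution ($\tilde{\tau}'\circ\tilde{\tau}'=\id$), contracting an irreducible curve to $p_{ii}$ forces $p_{ii}$ to be a genuine indeterminacy point, so $I(\tau')=\{p_{ii}\}_{i=1}^4$. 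That each $C_i'$ is a \emph{smooth} conic follows from computing the nonvanishing discriminant of the quadratic form cut out on the plane $\{x_i=0\}$.

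For part (2), the two lines of $Q$ through $p_{ii}$ are its two rulings there, which I would obtain as the two components of the degenerate conic cut on $Q$ by the tangent plane $T_{p_{ii}}Q$; concretely this is a binary quadratic in the two transverse coordinates whose roots are the slopes $m_1,m_2$ of the two lines. Parametrizing such a line and applying the polynomial form of $\tilde{\tau}'$, one checks that its image is again a line of $Q$ through $p_{ii}$, and that on the slope $m$ the map $\tau'$ acts by $m\mapsto c/m$ for a constant $c$ equal to $m_1m_2$ (the product of the two roots). This involution of $\P^1$ interchanges $m_1$ and $m_2$, so $\tau'$ swaps the two lines through each $p_{ii}$.

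For part (3), the substantive point, let $\mu:B\to Q$ be the blow-up of the four indeterminacy points $\{p_{ii}\}$, with exceptional curves $\varepsilon_i$ over $p_{ii}$. The key observation is that the strict transform $\widetilde{C_i'}$ is a $(-1)$-curve: on $Q\cong\P^1\times\P^1$ the conic $C_i'$ is a $(1,1)$-curve (self-intersection $2$) passing through exactly the three vertices $p_{jj}$ with $j\neq i$, whence $(\widetilde{C_i'})^2=2-3=-1$. A local blow-up computation at each $p_{ii}$ (working in the chart $x_i=1$, with the two remaining coordinates as local coordinates on the smooth surface $Q$) shows that $\tau'\circ\mu$ extends across $\varepsilon_i$, so that blowing up the single reduced point $p_{ii}$ already resolves the indeterminacy and carries $\varepsilon_i$ isomorphically onto $C_i'$. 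Thus $\tau'\circ\mu:B\to Q$ is a birational morphism whose exceptional locus is precisely the four disjoint $(-1)$-curves $\widetilde{C_i'}$, contracted to the four points $p_{ii}$; in particular $(\tau'\circ\mu)^{-1}\sI_{\{p_{ii}\}}\cdot\sO_B$ is invertible. By the universal property of blowing up, $\tau'\circ\mu$ factors through $\mu$, yielding a morphism $g:=\mu^{-1}\circ\tau'\circ\mu:B\to B$. Since $(\tau')^2=\id$ gives $g\circ g=\id$ as birational maps, the inverse $g^{-1}=g$ is also a morphism, and a birational morphism of smooth projective surfaces with a morphism inverse is biregular; hence $g\in{\rm Aut}\,(B)$.

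I expect the main obstacle to be part (3): one must confirm that blowing up only the four reduced points $p_{ii}$ (rather than any infinitely near points) already resolves $\tau'$, and that the resulting morphism contracts exactly the four strict transforms $\widetilde{C_i'}$. Both follow from the explicit local computation together with the $(-1)$-curve count, after which the universal property of the blow-up makes the passage to ${\rm Aut}\,(B)$ immediate. Parts (1) and (2) are then purely mechanical substitutions into the polynomial form of $\tilde{\tau}'$.
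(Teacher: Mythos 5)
Your proof is correct and takes essentially the approach the paper intends: the paper offers no argument beyond the remark that the assertions are ``readily checked'' from the definition of $\tilde{\tau}'$ (citing Mukai), and your computations with the polynomial form $[\alpha_1x_2x_3x_4:\alpha_2x_1x_3x_4:\alpha_3x_1x_2x_4:\alpha_1\alpha_2\alpha_3x_1x_2x_3]$ --- the edge-by-edge base-locus analysis for (1), the slope involution $m\mapsto m_1m_2/m$ for (2), and for (3) the local resolution at each $p_{ii}$ together with the count $(\widetilde{C_i'})^2=2-3=-1$ and the universal property of the blow-up --- supply exactly the direct verification being left to the reader. Your observation that $C_i'$ passes through the three points $p_{jj}$, $j\neq i$, but not through $p_{ii}$ is also the fact the paper itself invokes later in the proof of Corollary~\ref{cor21}.
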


By the property (2), $\tau'(p_{ij}) = p_{ji}$ if $1 \le i \not= j \le 4$. Therefore $\tau'$ lifts to $$\tau \in {\rm Aut}\, (T).$$ Since $q : X \to T$ is the finite double cover branched along the unique anti-bicanonical divisor $$\sum_{i=1}^{4} (q(E_i)+q(F_i))\in |-2K_T|,$$ it follows that $\tau$ lifts to an involution $$\epsilon \in {\rm Aut}\, (X).$$ Apriori, there are exactly the two choices of the lifting $\epsilon$; if we denote one lifting by $\epsilon_0$ then the other is $\epsilon_0 \circ \theta$. Recall that $\theta^*\omega_X = -\omega_X$. Thus, we may and do choose the unique lift $\epsilon$ with $\epsilon^* \omega_X = -\omega_X$.
Set
$$Z := X/\langle \epsilon \rangle\,\, .$$
and denote the quotient morphism by
$$\pi : X \to Z\,\, .$$
The following discovery due to 
Mukai \cite[Proposition 2]{Mu10} is also crucial for us:
\begin{proposition}\label{prop21}
The involution $\epsilon$ acts on $X$ freely. In particular,
$Z$ is an Enriques surface with a numerically trivial involution $\theta_Z \in {\rm Aut}\, (Z)$ induced from $\theta \in {\rm Aut}\, (X)$.
\end{proposition}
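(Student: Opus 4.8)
The plan is to reduce the whole statement to the single geometric assertion that $\epsilon$ acts without fixed points, and then to prove that freeness by transporting the fixed-point analysis down to $T$. Granting freeness, $\pi\colon X\to Z$ is an étale double cover of the K3 surface $X$, so $Z$ is by definition an Enriques surface and $\pi$ is its canonical covering. To attack freeness I would first record that $\epsilon$ is anti-symplectic: by construction $\epsilon^{*}\omega_{X}=-\omega_{X}$, so at any fixed point $P$ the involution $(d\epsilon)_{P}$ of $T_{P}X$ acts on $\wedge^{2}T_{P}^{*}X$ by $-1$, and an involution of a $2$-plane with this property has eigenvalues $\{1,-1\}$. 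Consequently the fixed locus $X^{\epsilon}$ is a smooth (possibly empty) curve and contains no isolated point; hence it suffices to show that $\epsilon$ has no fixed curve.

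Next I would transfer the question to $T$ via $q\circ\epsilon=\tau\circ q$. Any fixed curve $D\subset X$ of $\epsilon$ maps onto a curve $q(D)\subset T$ fixed pointwise by $\tau$, so it is enough to prove that $\tau$ has no one-dimensional fixed locus on $T$. A curve fixed pointwise by $\tau$ is either the strict transform of a curve on $Q=\P^{1}\times\P^{1}$ fixed pointwise by $\tau'$, or one of the exceptional curves $q(C_{ij})$ of the blow-down $T\to Q$. For the first type I would compute $\Sing$-free fixed points of $\tilde{\tau}'$ directly: clearing denominators, a fixed point must satisfy $x_{1}^{2}/\alpha_{1}=x_{2}^{2}/\alpha_{2}=x_{3}^{2}/\alpha_{3}=x_{4}^{2}/(\alpha_{1}\alpha_{2}\alpha_{3})$, a finite set in $\P^{3}$, so $\tau'$ has only finitely many fixed points on $Q$ and there is no $\tau'$-fixed curve to lift. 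For the exceptional curves I would invoke Lemma \ref{lem22}: for $i\ne j$ one has $\tau'(p_{ij})=p_{ji}$, so $\tau$ interchanges the disjoint curves $q(C_{ij})$ and $q(C_{ji})$, while over each indeterminacy point $p_{ii}$ the automorphism $\tau$ carries $q(C_{ii})$ to the strict transform of the contracted conic $C'_{i}$, again a distinct curve. Thus no exceptional curve is fixed pointwise, and $\tau$ has finite fixed locus. (As a consistency check, $\tau$ even interchanges the two branch components $q(E_{i})$ and $q(F_{i})$, which meet only at $p_{ii}$ in $Q$ and are therefore disjoint on $T$, so $\tau$ has no fixed point on the branch locus of $q$ at all.) Therefore $\epsilon$ has no fixed curve, and being anti-symplectic it has no fixed point: $\epsilon$ is free.

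It then remains to produce $\theta_{Z}$ and verify numerical triviality. Since $\theta$ commutes with $\epsilon$ by Lemma \ref{lem21}(2), it descends to $\theta_{Z}\in\Aut(Z)$ with $\pi\circ\theta=\theta_{Z}\circ\pi$; as $\theta^{2}=\id$ and $\theta\notin\langle\epsilon\rangle$, the automorphism $\theta_{Z}$ is a non-trivial involution. To see that $\theta_{Z}$ is numerically trivial I would combine $\theta^{*}=\id$ on $\Pic(X)$ (Lemma \ref{lem21}(1)) with the identity $\theta^{*}\circ\pi^{*}=\pi^{*}\circ\theta_{Z}^{*}$, which gives $\pi^{*}(\theta_{Z}^{*}x-x)=0$ in ${\rm NS}(X)$ for every $x\in{\rm Num}(Z)$; since $\pi_{*}\pi^{*}=2$, the map $\pi^{*}$ is injective on the torsion-free group ${\rm Num}(Z)$, whence $\theta_{Z}^{*}=\id$ on ${\rm Num}(Z)$.

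I expect the main obstacle to be the exceptional-curve bookkeeping over the points $p_{ii}$: one must use Lemma \ref{lem22}(1)--(3) carefully to identify the image of $q(C_{ii})$ under $\tau$ as the strict transform of $C'_{i}$, and thereby exclude it as a fixed curve, and one must check that clearing denominators in $\tilde{\tau}'$ introduces no spurious fixed components supported on the base locus. The remaining ingredients—the anti-symplectic local model excluding isolated fixed points, and the descent of $\theta$—are routine once these points are settled.
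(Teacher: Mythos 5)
Your proposal is correct, but a comparison with ``the paper's own proof'' is moot here: the paper does not prove this proposition at all --- it is quoted as Mukai's result \cite[Proposition 2]{Mu10}, so what you have produced is a self-contained reconstruction rather than an alternative to an internal argument. Your reconstruction is sound and follows the natural route. The three pillars all check out: (i) since $\epsilon^{*}\omega_{X}=-\omega_{X}$ and $\omega_X$ is nowhere vanishing, at any fixed point $d\epsilon$ has determinant $-1$, hence eigenvalues $\{1,-1\}$, so $X^{\epsilon}$ is a smooth curve with no isolated points --- this correctly reduces freeness to the non-existence of a fixed curve; (ii) pushing a hypothetical fixed curve down through the finite map $q$ via $q\circ\epsilon=\tau\circ q$ and ruling out pointwise-fixed curves of $\tau$ on $T$ works, and the worry you flag about spurious fixed components after clearing denominators is easily settled: the only irreducible curves of $Q$ contained in the coordinate hyperplanes are the smooth conics $C'_{i}=Q\cap(x_i=0)$, and these are contracted by $\tau'$ to $p_{ii}\notin C'_{i}$ (Lemma \ref{lem22}(1)), hence are not pointwise fixed, while off the coordinate hyperplanes your equations $x_1^2/\alpha_1=x_2^2/\alpha_2=x_3^2/\alpha_3=x_4^2/(\alpha_1\alpha_2\alpha_3)$ cut out a finite set; the exceptional curves over $p_{ii}$ are excluded exactly as in the paper's own bookkeeping $\epsilon(C_{ii})=C_i$ (Corollary \ref{cor21}); (iii) the descent of $\theta$ via Lemma \ref{lem21}(2) and numerical triviality via $\pi^{*}\theta_Z^{*}=\theta^{*}\pi^{*}$, $\theta^{*}=\id$ on ${\rm Pic}(X)$, and injectivity of $\pi^{*}$ on the torsion-free group ${\rm Num}(Z)$ (from $\pi_{*}\pi^{*}=2$) is complete. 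Two trivial loose ends: you assert $\theta\notin\langle\epsilon\rangle$ without justification --- it is immediate since $X^{\theta}=\cup_{i}(E_i\cup F_i)\neq\emptyset$ while $\epsilon$ is free (or since $\epsilon(E_i)=F_i$ but $\theta^{*}=\id$ on ${\rm Pic}(X)$); and your parenthetical check that $\tau$ has no fixed points on the branch locus, while true, is not needed, because the anti-symplectic step already excludes isolated fixed points of $\epsilon$, so only fixed \emph{curves} of $\tau$ matter.
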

Set $$C_i := \epsilon (C_{ii})\,\, (i = 1,\,2,\,3,\, 4).$$
Then, $C_i$ is the proper transform of the curve $C_i'$ in Lemma \ref{lem22} under the morphism
$$X \to T \to B \to \P^1 \times \P^1 = Q\,\, .$$
\begin{corollary}\label{cor21} 
\begin{enumerate}
\item $\epsilon (E_i) = F_i$, $\epsilon(F_i) = E_{i}$ for all $i =1$, $2$, $3$, $4$.
\item $\epsilon(C_{ij}) = C_{ji}$ for all $i$, $j$ such that $i \not= j$.
\item $(C_i, E_i) = (C_i, F_i) = 1$, $(C_i, C_{ii}) = 0$, $(C_i, C_{kj}) = 0$
for all $i$, $j$, $k$ such that $k \not= j$.
\item $(C_i, E_j) = (C_i, F_j) = 0$ for all $i$, $j$ such that $j \not= i$.
\end{enumerate}
\end{corollary}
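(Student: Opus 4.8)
The plan is to push the involution $\epsilon$ down through $X \to T \to B \to Q = \P^1\times\P^1$, read off the combinatorics of $\tau'$ on $Q$, and then pull the information back up to $X$, using in the last step only that $\epsilon$ is an automorphism and hence preserves the intersection form.

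First I would record how $\tau'$ moves the curves of $Q$. The images of the $E_j$ and the $F_i$ under $X \to T \to B \to Q$ are the two rulings; writing $\overline{E}_j$, $\overline{F}_i$ for these lines we have $p_{ij} = \overline{E}_j \cap \overline{F}_i$, and in particular $p_{ii} = \overline{E}_i \cap \overline{F}_i$. Since the two lines through $p_{ii}$ are exactly $\overline{E}_i$ and $\overline{F}_i$, Lemma \ref{lem22}(2) gives $\tau'(\overline{E}_i) = \overline{F}_i$ and $\tau'(\overline{F}_i) = \overline{E}_i$ for every $i$, so $\tau'$ exchanges the two rulings; this is consistent with the relation $\tau'(p_{ij}) = p_{ji}$ noted after Lemma \ref{lem22}.

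Next I would lift. Since $\epsilon$ descends to $\tau$ we have $q\circ\epsilon = \tau\circ q$, and since $\tau$ lifts $\tau'$ it sends the proper transform $q(E_j)$ of $\overline{E}_j$ to $q(F_j)$, and the exceptional $(-1)$-curve $q(C_{ij})$ over $p_{ij}$ to the one over $p_{ji}$, i.e. to $q(C_{ji})$ when $i\neq j$. Now $E_j, F_i$ lie in the branch locus $\cup_k(E_k\cup F_k)$ of $q$ (Lemma \ref{lem21}(3)), and each $C_{ij}$ is $\theta$-invariant by Lemma \ref{lem21}(1), so the reduced preimages satisfy $q^{-1}(q(F_j)) = F_j$ and $q^{-1}(q(C_{ji})) = C_{ji}$. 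As $\epsilon(E_j)$ and $\epsilon(C_{ij})$ are irreducible curves, applying $q$ and comparing forces $\epsilon(E_j) = F_j$, $\epsilon(F_j) = E_j$ and $\epsilon(C_{ij}) = C_{ji}$ for $i\neq j$, which is (1) and (2).

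Finally, for (3) and (4) I would transport intersection numbers across $\epsilon$. Since $C_i = \epsilon(C_{ii})$ and $\epsilon$ preserves the form, parts (1) and (2) give $(C_i, E_j) = (C_{ii}, F_j)$, $(C_i, F_j) = (C_{ii}, E_j)$ and $(C_i, C_{kj}) = (C_{ii}, C_{jk})$ for $k\neq j$; every right-hand side is a known entry of the double Kummer configuration, namely $\delta_{ij}$, $\delta_{ij}$, and $0$. The one number resisting this reduction is $(C_i, C_{ii})$, and this is the step I expect to be the real obstacle: $C_{ii}$ is not the $\epsilon$-image of a standard curve, and the obvious move $(C_i,C_{ii})=(\epsilon C_{ii},\epsilon C_i)=(C_{ii},C_i)$ is circular. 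Here I would instead argue downstairs: under $\phi : X \to Q$ one has $\phi(C_{ii}) = p_{ii}$ whereas $\phi(C_i) = C_i' = Q\cap(x_i = 0)$, and because $p_{ii}$ has $i$-th coordinate $1\neq 0$ it does not lie on $C_i'$; hence $C_{ii}$ and $C_i$ have disjoint images under the morphism $\phi$ and are therefore disjoint, giving $(C_i, C_{ii}) = 0$.
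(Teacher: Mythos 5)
Your proposal is correct and follows essentially the same route as the paper: parts (1) and (2) from the description of $\tau$ (which you flesh out via the ruling swap and the descent/lift through $q$), parts (3) and (4) by transporting intersection numbers across $\epsilon$ using $\epsilon(C_{ii}) = C_i$, and the one genuinely exceptional number $(C_i, C_{ii})$ handled exactly as in the paper, namely by the fact from Lemma \ref{lem22} (1) that the contracted conic $C_i'$ does not pass through $p_{ii}$. You correctly identified that the naive $\epsilon$-transport of $(C_i, C_{ii})$ is circular, which is precisely why the paper isolates that case.
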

\begin{proof} The assertions (1) and (2) follow from the description of $\tau$. Then the assertions (3) and (4) follow from $\epsilon(C_{ii}) = C_{i}$ and the assertions (1) and (2), except possibly $(C_i, C_{ii}) = 0$. The latter follows from the fact that the conic curve $C'_{i} \subset Q$ that is contracted to $p_{ii}$ by $\tau'$ does not pass through $p_{ii}$ (See Lemma \ref{lem22} (1)).

\end{proof}

\section{Construction and proof of Theorem \ref{thm1}}\label{sect3}

We employ the same notation and the assumption (Assumption \ref{ass21}) as in Section \ref{sect2}. For instance,
$$X = {\rm Km}\, (E \times F)\,\, ,\,\, Z = X/\langle \epsilon \rangle\,\, ,\,\, \pi : X \to Z\,\, .$$
We also use the following notation for curves and points on the Enriques surface $Z$:
$$H_j := \pi(E_j)\,\, ,
\,\, D_{ij} := \pi(C_{ij})\,\, , \,\, Q_{ij} := \pi(P_{ij})\,\, ,$$
and via the isomorphism $\pi|_{E_j} : E_j \to H_j$, we also regard $x$ as the affine coordinate of $H_j$. Then $Q_{ij} \in H_j$ and
$$x(Q_{1j}) = 1\,\, ,\,\, x(Q_{2j}) = t\,\, ,\,\, x(Q_{3j}) = \infty\,\, ,\,\, x(Q_{4j}) = 0\,\, .$$
By Corollary \ref{cor21}, we have
$$\pi^{-1}(H_j) = E_j \cup F_j$$
for each $j =1$, $2$, $3$, $4$ and
$$\pi^{-1}(D_{ij}) = C_{ij} \cup C_{ji}\,\, ,\,\, \pi^{-1}(Q_{ij}) = \{P_{ij},
P_{ji}'\}$$
if $i \not= j$, while
$$\pi^{-1}(D_{ii}) = C_{ii} \cup C_i\,\, ,\,\, \pi^{-1}(Q_{ii}) = \{P_{ii} \cup P_{i}\}\,\, ,$$
again for each $i$.
Here $P_{i}$ is the unique intersection point of $C_i \cap F_i$.

Let $\mu_1 : Z_1 \to Z$ be the blowing up at the point $Q_{32} \in H_2$, i.e., the blowing up at $\infty$ under the coordinate $x$ of $H_2$. Let
$$E_{\infty} := \P(T_{Z,Q_{32}}) \simeq \P^1$$
be the exceptional divisor of $\mu_1$. We then choose three mutually different points on $\P(T_{Z,Q_{32}})$, say $Q_{32k}$ ($k=1$, $2$, $3$). Let $\mu_2 : Z_2 \to Z_1$ be the blowings up of $Z_1$ at the three points $Q_{32k}$.

Our main theorem is Theorem \ref{thm31} below. Clearly, Theorem \ref{thm1} follows from Theorem \ref{thm31} by taking $Y = Z_2$:
\begin{theorem}\label{thm31}
${\rm Aut}\, (Z_2)$ is not finitely generated.
\end{theorem}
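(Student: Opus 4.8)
The plan is to pull ${\rm Aut}\,(Z_2)$ back to the Enriques surface $Z$ and then to exhibit inside it a non-finitely generated additive subgroup of $(\C,+)$, the non-finite generation coming ultimately from the transcendence of $t$ and $s$ in Assumption \ref{ass21}.

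First I would reduce to $Z$. Since $Z$ is the unique minimal model of $Z_2$ and has non-negative Kodaira dimension, ${\rm Bir}\,(Z)={\rm Aut}\,(Z)$, and the blow-down $\nu:Z_2\to Z$ gives an injection ${\rm Aut}\,(Z_2)\hookrightarrow{\rm Aut}\,(Z)$ (as in Remark \ref{rem1}) whose image is exactly the set of $g\in{\rm Aut}\,(Z)$ with $g(Q_{32})=Q_{32}$ and with the induced $\bar g\in{\rm PGL}(T_{Z,Q_{32}})$ preserving $\{Q_{32k}\}$ on $E_\infty=\P(T_{Z,Q_{32}})$. Because every $g$ lifts to ${\rm Aut}\,(X)$ and the latter permutes the curves $E_i,F_i$ (Lemma \ref{lem21}(4)), it permutes the four curves $H_j=\pi(E_j)$; as $Q_{32}$ lies on $H_2$ and on no other $H_j$, any $g$ fixing $Q_{32}$ fixes $H_2$ and hence the tangent direction $v_{H_2}\in E_\infty$. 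Thus $\bar g$ lies in the affine group of $E_\infty$ fixing $v_{H_2}$, and for a generic choice of the three points $Q_{32k}$ an affine map preserving them is the identity, so ${\rm Aut}\,(Z_2)$ is identified, up to finite index, with the kernel of $g\mapsto\bar g$. Here $g|_{H_2}:x\mapsto a_H(g)\,x+b_H(g)$ is an \emph{arbitrary} affine map fixing $Q_{32}=\infty$; crucially it need not lie in the finite group permuting $\{Q_{12},Q_{22},Q_{42}\}$, because $g$ may carry the curves $D_{i2}$ off the double Kummer configuration, and this is what leaves room for an infinite multiplier.

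Next I would set up the two characters of the stabilizer $\Sigma:={\rm Aut}\,(Z)_{Q_{32}}$ attached to this affine action: the multiplicative $a_H:\Sigma\to\C^*$ (the linear part on $H_2$) and, on the subgroup $\{g\in\Sigma:a_H(g)=1\}$, the additive $b_H:\{a_H=1\}\to(\C,+)$ (the translation length on $H_2$); both are homomorphisms. For $g\in{\rm Aut}\,(Z_2)$ the condition $\bar g=\id$ forces $dg_{Q_{32}}$ to be scalar, and its eigenvalues are governed by the action on $\omega_X$ of the K3 cover, which is of finite order (cf. Lemma \ref{lem21}(1)); hence $a_H$ has finite image on ${\rm Aut}\,(Z_2)$ and $G:=\{g\in{\rm Aut}\,(Z_2):a_H(g)=1\}$ has finite index in it. By Theorem \ref{thm11} it suffices to show that $G$ is not finitely generated, and since $b_H:G\to(\C,+)$ is a homomorphism, it suffices to show that its image is not finitely generated.

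The heart of the argument, and the main obstacle, is the explicit construction via $X={\rm Km}\,(E\times F)$ and the quotient $\pi$ of two automorphisms: an $\eta\in{\rm Aut}\,(Z_2)$ with $\bar\eta=\id$ and $b_H(\eta)=b_0\neq0$ (a genuine translation of $H_2$ fixing the normal direction at $Q_{32}$), and a $\phi\in\Sigma$ whose linear part $a_H(\phi)=\alpha$ has infinite multiplicative order. I would obtain these by lifting to $X$, where all automorphisms commute with $\theta$ and preserve $\cup_i(E_i\cup F_i)$ (Lemma \ref{lem21}) and where Corollary \ref{cor21} controls the interaction of $\epsilon$ with the configuration; the multiplier $\alpha$ should come out transcendental precisely because the four marked points on $E_2$ and on the $F_i$ involve the transcendental $t,s$ and are not projectively equivalent (Assumption \ref{ass21}). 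Granting $\phi$ and $\eta$, the conjugates $\eta_n:=\phi^{n}\eta\phi^{-n}$ still satisfy $\bar\eta_n=\id$ and $a_H(\eta_n)=1$, so $\eta_n\in G$, while a direct computation on $H_2$ gives $b_H(\eta_n)=\alpha^{n}b_0$. Hence $b_H(G)$ contains $\{\alpha^{n}b_0:n\in\Z\}$, which generates the additive group $\Z[\alpha,\alpha^{-1}]\,b_0$; since $\alpha$ is transcendental the powers $\alpha^{n}$ are $\Z$-linearly independent, so this group is not finitely generated. Therefore $b_H(G)$, hence $G$, hence ${\rm Aut}\,(Z_2)$, is not finitely generated. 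The delicate points I expect to absorb most of the work are the construction of $\phi$ with transcendental multiplier and of the translation $\eta$ landing in ${\rm Aut}\,(Z_2)$, together with the verification that the three points $Q_{32k}$ are generic enough to force $\bar g=\id$.
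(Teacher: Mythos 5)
Your overall architecture coincides with the paper's: reduce via the bicanonical pencil / blow-up structure to the stabilizer of $Q_{32}$ with scalar differential, split off a finite-index subgroup using finiteness of the (bi)canonical representation, map it by the translation character on $H_2$ to $(\C,+)$, and produce a non-finitely generated image by conjugating one translation $\eta$ by powers of a multiplier automorphism $\phi$, getting $\alpha^{n}b_0$. This is exactly the paper's $\rho$ on ${\rm Ine}\,(Z,Q_{32},T_{Q_{32}})$ and its elements $(f_2^2)^{-n}\circ f_1\circ (f_2^2)^{n}$, which act on $H_2$ by $x\mapsto x+t^{-2n}a$. (Two small remarks on the reduction: the genericity you impose on the three points $Q_{32k}$ is unnecessary --- after passing to the finite-index subgroup fixing each $Q_{32k}$, an automorphism of $\P^1\simeq E_\infty$ fixing three points is the identity, which is how Proposition \ref{prop31} argues --- and your claim that $g$ fixing $Q_{32}$ must fix $H_2$ is the paper's Lemma \ref{lem31}(1), proved the same way via the lift to $X$ and Lemma \ref{lem21}(4).)

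The genuine gap is at what you yourself call the heart: you \emph{grant} the existence of $\eta$ and $\phi$, and their construction is essentially the entire content of the paper's proof of Proposition \ref{prop32}(1); it does not follow from ``lifting to $X$'' together with Lemma \ref{lem21} and Corollary \ref{cor21}. Since ${\rm Aut}\,(Z)$ is discrete, the existence of an automorphism fixing $Q_{32}$ and scaling $H_2$ by an infinite-order multiplier is precisely the hard point. The paper produces $\eta$ ($=f_1$) from the genus one fibration with the $IV^*$ fiber $M_2$: its Jacobian is the rational elliptic surface No.\ 27 of \cite{OS91}, whose narrow Mordell--Weil lattice is $A_2$; a point $P$ of height $2$ satisfies $(P)\cap(O)=\emptyset$ by \cite[Formula 8.19]{Sh90}, so the induced translation preserves every component of $M_2$, fixes $Q_{32}$, and restricts to $x\mapsto x+a$ on $H_2$ with $a\neq 0$; one then passes to a power to land in ${\rm Ine}\,(Z,Q_{32},T_{Q_{32}})$. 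The multiplier $\phi$ ($=f_2^2$) comes from the $I_8$ fibration $M_1$, which has no section on $Z$, so one must pass to its Jacobian via Kond\=o's construction $R_1=X/\langle\iota\rangle$ with $\iota=T([C_{12}])\circ\epsilon$ (\cite[Lemma 2.6]{Ko86}), verify by a height computation that $[C_{12}]$ is $2$-torsion and that $[C_{22}]=[C_{11}]+[C_{12}]$, and deduce (Lemma \ref{lem32}) that $[C_{11}]+[C_2]$ is $\iota$-invariant and hence descends to an automorphism $f_2$ of $Z$; Kodaira's description \cite[Page 604]{Ko63} of $M_1\setminus{\rm Sing}\,M_1$ as $\C^{\times}\times\Z/8$ then yields $f_2:(x,m)\mapsto(tx,m+4)$, so that only the square $f_2^2$ preserves $H_2$ and gives $x\mapsto t^2x$. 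Note also that your proposed source of transcendence is off: the multiplier is $t^2$ because the section $C_{22}$ meets $E_2$ at $x=t$, and its transcendence is directly Assumption \ref{ass21}; the failure of projective equivalence of the two marked quadruples plays no role here (it enters Mukai's construction of $\epsilon$, not this computation). Without an actual construction of $\eta$ and $\phi$, the argument does not close.
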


In the rest of this section, we prove Theorem \ref{thm31}.

We denote
$$\mu := \mu_1 \circ \mu_2 : Z_2 \to Z_1 \to Z\,\, .$$
By $E_{32k}$, we denote the exceptional curve over $Q_{32k}$ under $\mu_2$ and
by $E_{\infty}'$ the proper transform of $E_{\infty}$ under $\mu_2$.

First we reduce the proof to $Z$. For this, we recall that
$${\rm Aut}\, (Z_2) \subset {\rm Aut}\, (Z)$$
via $\mu$ (See Remark \ref{rem1}).
We define
$${\rm Ine}\,(Z, Q_{32}, T_{Q_{32}}) := \{ f \in {\rm Dec}\,(Z, Q_{32})\,|\, df|_{T_{Z, Q_{32}}} = \id_{T_{Z, Q_{32}}}\}\,\, .$$

\begin{proposition}\label{prop31}
\begin{enumerate}
\item There is a subgroup $K$ of
${\rm Aut}\, (Z_2)$ such that $[{\rm Aut}\, (Z_2) : K] < \infty$, ${\rm Ine}\,(Z, Q_{32}, T_{Q_{32}}) \subset K$ via $\mu$ and $[K : {\rm Ine}\,(Z, Q_{32}, T_{Q_{32}})] < \infty$.

\item If ${\rm Ine}\,(Z, Q_{32}, T_{Q_{32}})$ is not finitely generated, then ${\rm Aut}\, (Z_2)$ is not finitely generated.
\end{enumerate}
\end{proposition}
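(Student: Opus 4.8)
The plan is to view ${\rm Aut}\,(Z_2)$ as a subgroup $G$ of ${\rm Aut}\,(Z) = {\rm Bir}\,(Z)$ via $\mu$ (as in Remark \ref{rem1}(2)), and then to interpose a subgroup $K$ so that ${\rm Ine}\,(Z, Q_{32}, T_{Q_{32}}) \subset K \subset G$ with both indices finite; once this is done, assertion (2) is a purely formal consequence of Theorem \ref{thm11} applied twice. So the whole content is in (1).

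First I would pin down the image $G = {\rm Aut}\,(Z_2) \subset {\rm Aut}\,(Z)$. Since $Z$ is the unique minimal model of $Z_2$ (Remark \ref{rem1}(2)) and has Kodaira dimension $0$, the birational morphism $\mu : Z_2 \to Z$ is canonical, so every $g \in {\rm Aut}\,(Z_2)$ carries the unique positive-dimensional fibre $\mu^{-1}(Q_{32}) = E_{\infty}' \cup E_{321} \cup E_{322} \cup E_{323}$ to itself; hence the descended $f = \mu \circ g \circ \mu^{-1}$ fixes $Q_{32}$, giving $G \subset {\rm Dec}\,(Z, Q_{32})$. Conversely an $f \in {\rm Dec}\,(Z, Q_{32})$ lifts to $f_1 \in {\rm Aut}\,(Z_1)$ acting on $E_{\infty} = \P(T_{Z, Q_{32}})$ through the projectivized derivative $\P(df_{Q_{32}})$, and $f_1$ lifts further to $Z_2$ precisely when $\P(df_{Q_{32}})$ preserves the three centres $\{Q_{32k}\}_{k=1}^{3}$. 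Thus $G = \{ f \in {\rm Dec}\,(Z, Q_{32}) \mid \P(df_{Q_{32}}) \text{ preserves } \{Q_{32k}\} \}$.

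Next I would introduce two homomorphisms. The first, $\rho : G \to {\rm PGL}\,(2, \C) = {\rm Aut}\,(E_{\infty})$, $f \mapsto \P(df_{Q_{32}})$, has image inside the finite group of automorphisms of $\P^1$ permuting the three points $Q_{32k}$ (a subgroup of the symmetric group on three letters). I then set $K := \ker \rho = \{ f \in {\rm Dec}\,(Z, Q_{32}) \mid df_{Q_{32}} \text{ is a scalar}\}$; such $f$ automatically lift to $Z_2$, so $K \subset G$ and $[G : K] = |{\rm im}\, \rho| < \infty$. The second, $\sigma : K \to \C^*$, sends $f$ to the scalar $\lambda_f$ with $df_{Q_{32}} = \lambda_f \cdot \id$; by definition $\ker \sigma = {\rm Ine}\,(Z, Q_{32}, T_{Q_{32}})$, so the inertia group lies in $K$ with $[K : {\rm Ine}\,(Z, Q_{32}, T_{Q_{32}})] = |{\rm im}\, \sigma|$. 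Everything then reduces to bounding ${\rm im}\,\sigma$.

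The crux, and the one genuinely non-formal step, is the finiteness of ${\rm im}\,\sigma$. For this I would lift $f \in K$ to the canonical K3 cover $\pi : X \to Z$: as $K_Z$ is $2$-torsion and preserved by $f$, a lift $\tilde{f} \in {\rm Aut}\,(X)$ exists (commuting with $\epsilon$ by the argument of Lemma \ref{lem21}(2)), and I may choose it to fix a point $P_{32} \in \pi^{-1}(Q_{32})$. Since $\pi$ is \'etale it identifies $T_{P_{32}} X \cong T_{Q_{32}} Z$, so $d\tilde{f}_{P_{32}} = \lambda_f \cdot \id$ and $\det d\tilde{f}_{P_{32}} = \lambda_f^2$. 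On the other hand, evaluating $\tilde{f}^* \omega_X = \alpha(\tilde{f}) \omega_X$ at the fixed point $P_{32}$ shows $\det d\tilde{f}_{P_{32}} = \alpha(\tilde{f})$, where $\alpha : {\rm Aut}\,(X) \to \C^*$ is the character on $H^0(X, \Omega_X^2) = \C \omega_X$. Because $\alpha$ has finite image for a K3 surface, $\lambda_f^2$ ranges over a finite set, hence so does $\lambda_f$, and ${\rm im}\,\sigma$ is finite. This proves (1), and then Theorem \ref{thm11} applied to ${\rm Ine}\,(Z, Q_{32}, T_{Q_{32}}) \subset K$ and to $K \subset {\rm Aut}\,(Z_2)$ yields (2). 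I expect the descent $G \subset {\rm Dec}\,(Z,Q_{32})$ and the finiteness of ${\rm im}\,\sigma$ to be the two points deserving care, the latter being the real obstacle.
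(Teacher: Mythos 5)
Your proposal is correct, and its skeleton is the same as the paper's: both sandwich ${\rm Ine}\,(Z, Q_{32}, T_{Q_{32}})$ inside the same intermediate group $K$ (automorphisms with scalar differential at $Q_{32}$, equivalently acting trivially on $E_\infty$), bound $[{\rm Aut}\,(Z_2):K]$ by the order $6$ of the stabilizer of three points on $\P^1$, bound $[K:{\rm Ine}]$ by finiteness of a pluricanonical representation, and then deduce (2) by two applications of Theorem \ref{thm11}. You deviate at two local steps, both legitimately. First, to see that ${\rm Aut}\,(Z_2)$ sits inside ${\rm Dec}\,(Z,Q_{32})$ with a well-defined action on $E_\infty$ permuting the centres $Q_{32k}$, the paper uses the crisp observation that $|2K_{Z_2}|$ consists of the single member $2E_{\infty}' + 4(E_{321}+E_{322}+E_{323})$, so every automorphism preserves $E_\infty'$ and the set $\{E_{32k}\}$ \emph{separately} (the multiplicities $2$ and $4$ distinguish them); your minimal-model argument only gives preservation of the whole fibre $\mu^{-1}(Q_{32})$, and to define your homomorphism into the stabilizer of $\{Q_{32k}\}$ you should add the one-line supplement that $g$ cannot interchange $E_{\infty}'$ (a $(-4)$-curve on $Z_2$) with any $E_{32k}$ (a $(-1)$-curve), after which the descent through $Z_1$ works as you say. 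Second, for the crux — finiteness of the scalars — the paper stays on the Enriques surface: from $df = \alpha(f)\,\id$ it gets $\alpha(f)^4 \in {\rm Im}\, r_2$ for the bicanonical representation of $Z$, finite by \cite[Theorem 14.10]{Ue75}; you instead lift to the K3 cover, choose the lift fixing $P_{32}$ (valid, since the deck involution $\epsilon$ swaps the two points of $\pi^{-1}(Q_{32})$, exactly as in the paper's proof of Lemma \ref{lem31}), and use finiteness of the canonical representation of $X$ to get $\lambda_f^2$ in a finite set. Your route buys a cleaner exponent ($\lambda_f^2$ rather than $\alpha(f)^4$) and reuses a lifting trick the paper needs anyway, at the cost of invoking the covering; the paper's version is more self-contained on $Z$, using the nowhere vanishing section of $(\Omega_Z^2)^{\otimes 2}$. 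Neither difference affects correctness.
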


\begin{proof} First we show (1). By the canonical bundle formula, we have
$$|2K_{Z_2}| = \{2E_{\infty}' + 4(E_{321} + E_{322} + E_{323})\}\,\, .$$
Since ${\rm Aut}\, (Z_2)$ preserves $|2K_{Z_2}|$, it follows that
$${\rm Aut}\, (Z_2) = {\rm Dec}\,(Z_2, E_{\infty}', E_{321} \cup E_{322} \cup E_{323})\,\, .$$
Therefore, via $\tau_2$, we have
$${\rm Aut}\, (Z_2) = {\rm Dec}\,(Z_1, E_{\infty}, \{Q_{321}, Q_{322}, Q_{323}\}) \subset {\rm Aut}\, (Z_1)\,\, .$$
Thus, the group
$$K :=
{\rm Dec}\,(Z_1, E_{\infty}, \{Q_{321}\}, \{Q_{322}\}, \{Q_{323}\})$$
is a subgroup of ${\rm Aut}\, (Z_2)$ with $[{\rm Aut}\, (Z_2) : K] \le 6 = |{\rm Aut}_{{\rm set}}\,(\{Q_{321}, Q_{322}, Q_{323}\})|$.

We will show that $K$ satisfies the requirement.

Since only $\id_{\P^1}$ is the automorphism of $\P^1$ pointwisely fixes three points, it follows that
$$K = {\rm Ine}\,(Z_1, E_{\infty})\,\, .$$
Since $E_{\infty} = \P(T_{Z, Q_{32}})$, we deduce that
$$K = \{f \in {\rm Dec}\, (Z, Q_{32})\,|\, df|_{T_{Z, Q_{32}}} = \alpha(f)
\id_{T_{Z, Q_{32}}}\,\, (\exists\alpha(f) \in \C^{\times})\} \subset {\rm Dec}\, (Z, Q_{32})\,\, .$$
Observe that if $df|_{T_{Z, Q_{32}}} = \alpha(f) \id_{T_{Z, Q_{32}}}$ for $f \in K$, then
$$(df \wedge df)^{\otimes 2}|_{(\wedge^2 T_{Z, Q_{32}})^{\otimes 2}} = \alpha(f)^4 \id_{(\wedge^2 T_{Z, Q_{32}})^{\otimes 2}}\,\, .$$
Since the line bundle $(\Omega_Z^2)^{\otimes 2}$ admits a nowhere vanishing global section, it follows that $\alpha(f)^4$ is in the image ${\rm Im}\, r_2$ of the bicanonical representation
$$r_2 : {\rm Aut}\, (Z) \to {\rm GL}(H^0(Z, (\Omega_Z^2)^{\otimes 2})) \simeq \C^{\times}$$
of ${\rm Aut}\, (Z)$ (\cite[Section 14]{Ue75}). Since ${\rm Im}\, r_2$
is finite by \cite[Theorem 14.10]{Ue75}, it follows that
$\{\alpha(f)\, |\, f \in K\}$ is also finite.
Hence $[K : {\rm Ine}\,(Z, Q_{32}, T_{Q_{32}})] < \infty$ as well.

Let us show (2). Recall Theorem \ref{thm11}. Then, if ${\rm Ine}\,(Z, Q_{32}, T_{Q_{32}})$ is not finitely generated, then $K$
is not finitely generated by $[K : {\rm Ine}\,(Z, Q_{32}, T_{Q_{32}})] < \infty$. Hence ${\rm Aut}\, (Z_2)$ is not finitely generated, again by $[{\rm Aut}\, (Z_2) : K] < \infty$.
\end{proof}

In what follows, we will show that ${\rm Ine}\,(Z, Q_{32}, T_{Q_{32}})$ is not finitely generated. This is a problem on the Enriques surface $Z$.

\begin{lemma}\label{lem31}
\begin{enumerate}
\item Let $f \in {\rm Dec}\, (Z, Q_{32})$. Then $f(H_2) = H_2$, i.e., $f \in {\rm Dec}\, (Z, H_2)$.

\item The differential maps $df|_{T_{Z, Q_{32}}}$ for all $f \in {\rm Dec}\, (Z, Q_{32})$ are simultaneously diagonalizable.

\item Let $f \in {\rm Ine}\,(Z, Q_{32}, T_{Q_{32}})$. Then $f \in {\rm Dec}\, (Z, H_2)$ by (1)
and
$$d(f|_{H_2})|_{T_{H_2, Q_{32}}} = \id_{T_{H_2, Q_{32}}}$$
for the induced action.
\end{enumerate}
\end{lemma}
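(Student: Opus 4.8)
The plan is to leverage the numerically trivial involution $\theta_Z \in {\rm Aut}(Z)$ produced in Proposition \ref{prop21}, whose key feature is that the upstairs involution $\theta$ is \emph{central} in ${\rm Aut}(X)$ by Lemma \ref{lem21}(2). The first step I would record, once and for all, is that $\theta_Z$ is central in ${\rm Aut}(Z)$: since $\pi : X \to Z$ is an \'etale double cover of the simply connected surface $X$, any $f \in {\rm Aut}(Z)$ admits a lift $\tilde f \in {\rm Aut}(X)$ with $\pi \circ \tilde f = f \circ \pi$ (the two lifts being $\tilde f$ and $\epsilon \circ \tilde f$); by Lemma \ref{lem21}(2) the lift satisfies $\tilde f \circ \theta = \theta \circ \tilde f$, and since $\theta$ descends to $\theta_Z$ this identity descends to $f \circ \theta_Z = \theta_Z \circ f$ on $Z$.

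For (1), I would lift $f \in {\rm Dec}(Z, Q_{32})$ to $\tilde f \in {\rm Aut}(X)$ and apply Lemma \ref{lem21}(4): $\tilde f$ preserves $\cup_{i=1}^4 (E_i \cup F_i)$, hence permutes its eight pairwise disjoint components $E_i, F_i$. Since $\pi(E_i) = \pi(F_i) = H_i$, the induced automorphism $f$ permutes the four mutually disjoint $(-2)$-curves $H_1, \dots, H_4$. As $f(Q_{32}) = Q_{32}$ and $Q_{32}$ lies on $H_2$ but on no other $H_i$, the image $f(H_2)$ is a curve among the $H_i$ still containing $Q_{32}$, which forces $f(H_2) = H_2$.

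For (2), I would differentiate the central relation at the fixed point $Q_{32}$. Both $f$ and $\theta_Z$ fix $Q_{32}$ (the latter because $Q_{32} \in H_2$ and $H_2$ is pointwise fixed by $\theta_Z$), so the chain rule applied to $f \circ \theta_Z = \theta_Z \circ f$ shows that $df|_{T_{Z,Q_{32}}}$ commutes with $d\theta_Z|_{T_{Z,Q_{32}}}$. The latter is an involution of the $2$-dimensional space $T_{Z,Q_{32}}$ having $T_{H_2,Q_{32}}$ as a $(+1)$-eigenline; a nontrivial holomorphic involution cannot have identity differential at a fixed point, so the second eigenvalue is $-1$. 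Thus $d\theta_Z|_{T_{Z,Q_{32}}}$ has the two distinct eigenvalues $\pm 1$, its centralizer in ${\rm GL}(T_{Z,Q_{32}})$ is the diagonal torus attached to its (single, $f$-independent) eigenbasis, and therefore every $df|_{T_{Z,Q_{32}}}$ for $f \in {\rm Dec}(Z,Q_{32})$ is diagonal in that common basis.

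For (3), the statement is immediate from (1) together with the definition of the inertia group. By (1) we have $f \in {\rm Dec}(Z, H_2)$, so $f$ restricts to $f|_{H_2} \in {\rm Aut}(H_2)$ and $df$ preserves the subspace $T_{H_2,Q_{32}} \subset T_{Z,Q_{32}}$ with $d(f|_{H_2})|_{T_{H_2,Q_{32}}} = (df|_{T_{Z,Q_{32}}})|_{T_{H_2,Q_{32}}}$; since $f \in {\rm Ine}(Z,Q_{32},T_{Q_{32}})$ forces $df|_{T_{Z,Q_{32}}} = \id$, restricting to $T_{H_2,Q_{32}}$ gives $d(f|_{H_2})|_{T_{H_2,Q_{32}}} = \id$. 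The only genuine input is the centrality of $\theta_Z$; granting that, all three parts are formal. The one point requiring a word of care is excluding $d\theta_Z|_{T_{Z,Q_{32}}} = \id$ in (2), which I would settle by the standard fact that a holomorphic involution whose differential at a fixed point is the identity is locally, hence (by connectedness of $Z$) globally, the identity.
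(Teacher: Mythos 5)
Your proof is correct, and while it rests on the same ultimate input as the paper --- Lemma \ref{lem21}, i.e.\ the centrality of $\theta$ in ${\rm Aut}\,(X)$ and the identification ${\rm Aut}\,(X) = {\rm Dec}\,(X, \cup_{i}(E_i \cup F_i))$ --- your part (2) takes a genuinely different route. The paper works upstairs: it chooses the lift $\tilde f$ of $f$ with $\tilde f(P_{32}) = P_{32}$, gets $\tilde f(E_2) = E_2$ from the uniqueness of the component of $X^{\theta}$ through $P_{32}$ (their proof of (1), which your downstairs permutation argument on the four disjoint curves $H_1, \dots, H_4$ matches in substance), and then identifies the two $d\theta$-eigenlines of $T_{X, P_{32}}$ concretely as $T_{E_2, P_{32}}$ (eigenvalue $+1$) and $T_{C_{32}, P_{32}}$ (eigenvalue $-1$), checking that $d\tilde f$ preserves the latter because the image curve $C_{32}' = \tilde f(C_{32})$ again has its tangent line inside the one-dimensional $(-1)$-eigenspace. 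You instead descend: you first record that $\theta_Z$ is central in ${\rm Aut}\,(Z)$ --- a clean observation, implicit in Proposition \ref{prop21} but never stated in the paper, and your derivation of it (lift, apply Lemma \ref{lem21}(2), push back down using surjectivity of $\pi$) is sound --- then differentiate the commutation relation at the common fixed point $Q_{32}$ and use that the centralizer in ${\rm GL}(T_{Z,Q_{32}})$ of a semisimple operator with the two distinct eigenvalues $\pm 1$ is the diagonal torus in its eigenbasis. The one extra ingredient your route requires, namely $d\theta_Z|_{T_{Z,Q_{32}}} \neq \id$ so that the second eigenvalue really is $-1$, you supply correctly via the standard linearization fact (a nontrivial finite-order automorphism of a connected complex manifold cannot have identity differential at a fixed point), whereas the paper gets the $-1$ for free from the explicit computation $d(\theta|_{C_{32}})_{P_{32}} = -1$. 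What your approach buys is economy: part (2) needs no choice of lift and no bookkeeping with $C_{32}$ and $C_{32}'$, only the abstract commutant; what the paper's approach buys is the sharper geometric statement that the common eigenbasis is exactly $T_{H_2, Q_{32}} \oplus T_{D_{32}, Q_{32}}$ (though only diagonalizability and the restriction to $H_2$ are used later). Your part (3) is the same formal restriction argument as the paper's.
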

\begin{proof}
Let $f \in {\rm Dec}\, (Z, Q_{32})$. Then, the one of the two lifts of $f$, say $\tilde{f}$, satisfies $\tilde{f}(P_{32}) = P_{32}$. Therefore the result follows from the corresponding result on $X$ (see eg. \cite{DO19}).

For the convenience of the readers, we recall the proof here from \cite{DO19}. Since $\tilde{f} \in {\rm Dec}(X, \cup_{j=1}^{4} (E_j \cup F_j))$ by Lemma \ref{lem21} (4) and $E_{2}$ is the unique component of $\cup_{j=1}^{4} (E_j \cup F_j)$, containinig $P_{32}$, it follows that $\tilde{f} \in {\rm Dec}\,(X, E_2)$. This shows (1).

By Lemma \ref{lem21} (1), (3), one has $\theta(R) = R$ for any smooth rational curve $R$ on $X$
and
$$d(\theta|_{E_2})_{P_{32}} = 1\,\, ,\,\, d(\theta|_{C_{32}})_{P_{32}} = -1\,\, .$$
In particular,
$$T_{X, P_{32}} = T_{E_2, P_{32}} \oplus T_{C_{32}, P_{32}}\,\, .$$
Note that $\tilde{f}(E_2) = E_2$ as observed above. Let $C_{32}' := \tilde{f}(C_{32})$. Then $P_{32} \in C_{32}' \simeq \P^1$ and the induced action $\theta|_{C_{32}'}$ satisfies
$$d(\theta|_{C_{32}'})_{P_{32}} = -1$$
by Lemma \ref{lem21} (1). Thus, $d\tilde{f}|_{T_{X, P_{32}}}$ for all $\tilde{f}$ preserve both $T_{E_2, P_{32}}$ and $T_{C_{32}, P_{32}}$. This implies (2).

The assertion (3) is now obvious.
\end{proof}
Recall that for $Q \in \P^1$,
$${\rm Ine}\, (\P^1, Q, T_{\P^1, Q}) := \{f \in {\rm Ine}(\P^1, Q)\, |\, df|_{T_{\P^1, Q}} = \id_{T_{\P^1, Q}}\} \simeq (\C, +)\,\, .$$
Here $(\C, +)$ is the additive group, in particular, an abelian group. The last isomorphism is given by
$$\C \ni c \mapsto (z \mapsto z + c) \in {\rm Ine}\, (\P^1, Q, T_{\P^1, Q})\,\, ,$$
if we choose an affine coordinate $z$ of $\P^1$ such that $z(Q) = \infty$.
By Lemma \ref{lem31} (3), we have then a representation
$$\rho : {\rm Ine}\,(Z, Q_{32}, T_{Q_{32}}) \to {\rm Ine}\, (H_2, Q_{32}, T_{H_{2}, Q_{32}}) \simeq (\C, +)\,\, .$$
Here, for the last isomorphism, we can use the affine coordinate $x$ of $H_2$ fixed at the beginning of this section.

\begin{proposition}\label{prop32}
\begin{enumerate}
\item There is $a \in \C \setminus \{0\}$ such that $t^{-2n}a \in {\rm Im}\,\rho$ for all positive integers $n$.
\item ${\rm Ine}\,(Z, Q_{32}, T_{Q_{32}})$
is not finitely generated.
\end{enumerate}
\end{proposition}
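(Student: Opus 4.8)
The plan is to obtain (2) from (1) by a soft group‑theoretic argument, and to prove (1) by producing the required automorphisms explicitly on the K3 cover $X$ and descending to $Z$.

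For the reduction, observe that ${\rm Im}\,\rho$ is a subgroup of the abelian group $(\C,+)$ and that $\rho$ maps ${\rm Ine}\,(Z,Q_{32},T_{Q_{32}})$ onto it, so ${\rm Im}\,\rho$ is a quotient of ${\rm Ine}\,(Z,Q_{32},T_{Q_{32}})$. By Assumption \ref{ass21} the number $t$ is transcendental over $\Q$, so for $a\ne 0$ the elements $t^{-2n}a$ ($n\ge 1$) are linearly independent over $\Q$; indeed a nontrivial $\Q$‑relation among them would, after clearing denominators and an appropriate power of $t$, give a nonzero polynomial relation satisfied by $t$. Hence, granting (1), the subgroup of ${\rm Im}\,\rho$ generated by the $t^{-2n}a$ is free abelian of infinite rank, in particular not finitely generated. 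Since every subgroup of a finitely generated abelian group is finitely generated, ${\rm Im}\,\rho$ is not finitely generated, and therefore neither is ${\rm Ine}\,(Z,Q_{32},T_{Q_{32}})$, being a group that surjects onto it. This proves (2).

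The heart is (1). I would work on $X$, where $\pi|_{E_2}:E_2\to H_2$ is an isomorphism carrying the coordinate $x$ with $x(P_{32})=\infty$. The strategy is to produce two automorphisms of $Z$ lying in ${\rm Dec}\,(Z,Q_{32})$: a \emph{translation} $\tau\in{\rm Ine}\,(Z,Q_{32},T_{Q_{32}})$ whose induced action on $H_2$ is $x\mapsto x+a$ with $a\ne 0$, and a \emph{hyperbolic} element $g\in{\rm Dec}\,(Z,Q_{32})$ whose induced action on $H_2$ is the scaling $x\mapsto t^{2}x$ (so $g$ fixes the two points $x=0$ and $x=\infty=Q_{32}$ of $H_2$ and multiplies the coordinate by the transcendental factor $t^{2}$). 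Granting these, set $f_n:=g^{-n}\circ\tau\circ g^{n}$. Since $g$ and $\tau$ fix $Q_{32}$ and $d\tau|_{T_{Z,Q_{32}}}=\id$, conjugation gives $df_n|_{T_{Z,Q_{32}}}=dg^{-n}\circ\id\circ dg^{n}=\id$, so $f_n\in{\rm Ine}\,(Z,Q_{32},T_{Q_{32}})$; and a direct substitution shows the induced action on $H_2$ is
$$x\ \longmapsto\ g^{-n}\bigl(\tau(t^{2n}x)\bigr)=t^{-2n}(t^{2n}x+a)=x+t^{-2n}a,$$
so that $\rho(f_n)=t^{-2n}a$. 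This yields (1) with this same $a$.

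It remains to build $\tau$ and $g$, and this is where the geometric work lies. I would exploit the two elliptic pencils on $X$ (general fibres isomorphic to $E$ and to $F$, for which the curves $E_j,F_i$ are sections or components of the $I_0^{*}$ fibres) together with the involutions $\theta$ and $\epsilon$ of Section \ref{sect2} (recall $\theta$ acts trivially on ${\rm Pic}\,(X)$, while $\epsilon$ interchanges $E_i\leftrightarrow F_i$ and brings in the second parameter $s$). The Möbius maps induced on $E_2\cong\P^1$ by the $2$‑torsion translations of $E$ (for instance $x\mapsto t/x$) and by the coordinate interchange with $F_2$ coming from $\epsilon$ can be composed into automorphisms of $X$ normalising $\langle\epsilon\rangle$, hence descending to ${\rm Aut}\,(Z)$; one then chooses the composite whose restriction to $H_2$ is the scaling $x\mapsto t^{2}x$ to serve as $g$, and a nontrivial parabolic composite fixing $Q_{32}$ to serve as $\tau$. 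The main obstacle is precisely to realise these prescribed actions by \emph{genuine} automorphisms: one must check that the chosen composite (i) has the expected infinite order so that its multiplier at $Q_{32}$ is exactly $t^{2}$ rather than a root of unity, (ii) descends to $Z$ and lies in ${\rm Dec}\,(Z,Q_{32})$, and (iii) that the translation $\tau$ is genuinely nontrivial, i.e. $a\ne 0$. Computing the differentials of the descended maps at $Q_{32}$ to confirm the multipliers $1$ and $t^{2}$ is the essential calculation; once it is in place, the conjugation identity above together with the transcendence of $t$ completes the proof.
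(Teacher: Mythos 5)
Your reduction of (2) to (1) and your conjugation trick $f_n=g^{-n}\circ\tau\circ g^{n}$, $x\mapsto x+t^{-2n}a$, are exactly the paper's skeleton (the paper conjugates a translation-type element by powers of an element acting as $x\mapsto t^{2}x$ on $H_2$). But there is a genuine gap at the declared heart of the argument: you never construct $\tau$ and $g$, and the construction you sketch cannot work. The $2$-torsion translations of $E\times F$ and the involutions $\theta$, $\epsilon$ generate, on $E_2\cong\P^1$ with coordinate $x$, only Möbius transformations permuting the four branch points $\{0,1,t,\infty\}$ (or comparing them with $\{0,1,s,\infty\}$, which are not projectively equivalent by Assumption \ref{ass21}); this is a \emph{finite} group of Möbius maps. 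No composite of such elements can restrict to $H_2$ as $x\mapsto t^{2}x$ (infinite order, since $t$ is transcendental, hence $t^2$ is not a root of unity) or as a nontrivial translation $x\mapsto x+a$. Items (i)--(iii) that you defer are therefore not checks of a sound construction but the actual missing content.

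What the paper does instead is the real work: it introduces two genus one fibrations $\varphi_{M_1},\varphi_{M_2}$ on $Z$ with fibers of Kodaira types $I_8$ and $IV^*$ through $H_2$, and uses their Jacobian fibrations, which are rational elliptic surfaces. For $\tau$: the Jacobian of the $IV^*$ fibration is No.~27 in the Oguiso--Shioda classification \cite{OS91}, its narrow Mordell--Weil lattice is $A_2$, and a section $P$ of height $2$ (so $(P)\cap(O)=\emptyset$ by \cite{Sh90}) gives a fiber-preserving automorphism fixing every component of $M_2$, acting on $H_2$ by $x\mapsto x+a$, $a\neq 0$; to get $d\tau=\id$ on the \emph{whole} tangent plane $T_{Z,Q_{32}}$ (a point you also leave open) the paper invokes simultaneous diagonalizability (Lemma \ref{lem31}) plus finiteness of the bicanonical representation \cite[Theorem 14.10]{Ue75} and replaces $t_P$ by a suitable power. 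For $g$: the Jacobian of the $I_8$ fibration is realized explicitly as $X/\langle\iota\rangle$ with $\iota=T([C_{12}])\circ\epsilon$, following Kond\=o \cite{Ko86} and Hulek--Sch\"utt \cite{HS11}; here $[C_{12}]$ is shown to be $2$-torsion by a height computation, and Lemma \ref{lem32} verifies that the section class $[C_{11}]+[C_2]$ is $\iota$-invariant, hence descends to an automorphism of $Z$ acting on $M_1\setminus{\rm Sing}\,M_1=\C^{\times}\times\Z/8$ by $(x,m)\mapsto(tx,m+4)$ via Kodaira's description \cite{Ko63}; its square fixes $H_2$ and $Q_{32}$ and acts as $x\mapsto t^{2}x$. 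None of this Mordell--Weil machinery is replaceable by the finite-order symmetries of the double Kummer pencil, so your proposal, as it stands, proves the easy implications but not statement (1).
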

\begin{proof}
The assertion (2) follows from the assertion (1).
Indeed, the additive subgroup $M$ generated by $\{t^{-2n}a\, |\, n \in \Z_{\ge 0}\}$ is not finitely generated as $a \not= 0$ and $t$ is transcendental over $\Q$ by our assumption (Assumption \ref{ass21}). The assertion (1) says that $M \subset {\rm Im}\,\rho$. Since ${\rm Im}\, \rho \subset (\C, +)$, the group ${\rm Im}\, \rho$ is also an abelian group. It follows that the abelian group ${\rm Im}\,\rho$ is not finitely generated, either,
regardless of $[{\rm Im}\, \rho : M]$.
Hence ${\rm Ine}\,(Z, Q_{32}, T_{Q_{32}})$ is not finitely generated as claimed.

In the rest, we will show the assertion (1) by constructing two genus one fibrations on $Z$ and by considering their Jacobian fibrations.

Consider the following two divisors $M_1$ and $M_2$ of Kodaira's type $I_8$ and $IV^*$ on $Z$:
$$M_1 := H_2 + D_{32} + H_3 + D_{31} + H_1 + D_{41} + H_4 + D_{42}\,\, ,$$
$$M_2 := H_2 + 2D_{32} + H_1 + 2D_{31} + H_4 + 2D_{34} + 3H_3\,\, .$$
Then $|M_1|$ and $|M_2|$ define genus one fibrations
$$\varphi_{M_1} : Z \to \P^1\,\, ,\,\, \varphi_{M_2} : Z \to \P^1\,\, .$$
$\varphi_{M_1}$ is the genus one fibration induced from an elliptic fibration
$\Phi_1 : X \to \P^1$ on $X$ given by the divisor of Kodaira's type $I_8$
$$N_1 :=  E_2 + C_{32} + F_3 + C_{31} + E_1 + C_{41} + F_4 + C_{42}\,\, ,$$
and $\varphi_{M_2}$ is the genus one fibration induced from an elliptic fibration $\Phi_2 : X \to \P^1$ on $X$ given by the divisor of Kodaira's type $IV^*$
$$N_2 :=  E_2 + 2C_{32} + E_1 + 2C_{31} + E_4 + 2C_{34} + 3F_3\,\, .$$
By the classification of \cite[Theorem 2.1]{Og89}, $\Phi_1$ then belongs to Type ${\mathcal J}_1$ and $\Phi_2$ belongs to Type ${\mathcal J}_3$ in \cite[Theorem 2.1]{Og89}. By the definition of the action of our Enriques involution $\epsilon$ on $X$ and the classification of \cite[Theorem 2.1]{Og89}, it follows that the reducible fibers of $\Phi_1$ are exactly $N_1$ and $\epsilon(N_1)$, and the reducible fibers of $\Phi_2$ are exactly $N_2$ and $\epsilon(N_2)$. Thus, $\varphi_{M_1}$ has no reducible fibers other than $M_1$ and $\varphi_{M_2}$ has also no reducible fibers other than $M_2$.

Let us consider the (proper non-singular, relatively minimal) Jacobian fibration $\varphi_i : R_i \to \P^1$ of $\varphi_{M_i}$ for $i=1$ and $2$. Then the fiber $R_{i, p}$ of $\varphi_i$ over general $p \in \P^1$ is ${\rm Pic}^0\,(Z_{i, p})$, i.e., the identity component of the Picard group of the corresponding fiber $Z_{i, t}$ of $\varphi_{M_i}$. Therefore, the Mordell-Weil group ${\rm MW}\,(\varphi_i)$ of $\varphi_i$ acts on $\varphi_{M_i}$, which is the unique biregular extension of the translation action of ${\rm Pic}^0\,(Z_{i, p})$ on $Z_{i, p}$ where $p \in \P^1$ runs through general points. Note also that the types of singular fibers are the same for $\varphi_{M_i}$ and $\varphi_i$ up to multiplicities \cite[Theorem 5.3.1]{CD89}. Therefore $c_2(R_i) = c_2(Z) = 12$. In particular, $R_i \to \P^1$ are rational elliptic surfaces.

Here and hereafter, we will use basic notions and properties of Mordell-Weil lattices due to Shioda \cite{Sh90}.

Let us consider first the action of ${\rm MW}\,(\varphi_2)$ on $\varphi_{M_2} : Z \to \P^1$. From the fact that $\varphi_{M_2}$ has also no reducible fibers other than $M_2$, we see that $\varphi_2 : R_2 \to \P^1$ belongs to
No. 27 in the classfication of \cite[Main Theorem]{OS91}. Then, the narrow Mordell-Weil lattice ${\rm MW}^0\,(\varphi_2)$ of $\varphi_2$ is isomorphic to the positive definite root lattice $A_2$. In particular, there is $P \in {\rm MW}^0\,(\varphi_2)$ such that $\langle P, P\rangle =2$ for the height pairing of ${\rm MW}^0(\varphi_2)$ \cite[Section 8]{Sh90}. For this $P$, we have $(P) \cap (O) = \emptyset$ by \cite[Formula 8.19]{Sh90}.  Here $(P)$ is the divisor on $R_2$ corresponding to $P$. The action $t_P$ of $P$ on $\varphi_{M_2} : Z \to \P^1$ then preserves each irreducible component of $M_2$ as $P \in {\rm MW}^0\,(\varphi_2)$, particularly the curve $H_2$ and the point $Q_{32} \in H_2$, and the action $t_P|_{H_2}$ is of the form
$$x \mapsto x + a$$
for some $a \not= 0$ under the affine coordinate $x$ of $H_2$. Recall that the action of $d(t_P)$ on $T_{Z, Q_{32}}$ is diagonalizable (Lemma \ref{lem31}). Then, by the finiteness of bicanonical representation \cite[Theorem 14.10]{Ue75}, by replacing $t_P$ by some power $t_P^k$ ($k \not= 0$) and $a$ by $ka$ if necessary, we obtain an element
$$f_2 \in {\rm Ine}\,(Z, Q_{32}, T_{Q_{32}})$$
such that  $\rho(f_2) = a \not= 0$.

Next we consider the Jacobian fibration $\varphi_1 : R_1 \to \P^1$.
We need an explicit geometric construction of $\varphi_1$ from $\varphi_{M_1}$ explained by \cite[Lemma 2.6]{Ko86} and \cite[Section 3]{HS11}. Note that $D_{21}$ is a $2$-section of $\varphi_{M_1}$ and $\pi^{-1}(D_{21}) = C_{12} \cup C_{21}$. The curves $C_{12}$ and $C_{21}$ are sections of $\Phi_1$.
We may and do choose $C_{21}$ as the zero section of $\Phi_1$ and set
$$0 := [C_{21}] \in {\rm MW}(\Phi_1)\,\, .$$
Here and hereafter, we use the following notation:

\medskip\noindent
{\bf Notation.} 
\begin{enumerate}\item For a section $D$ of $\Phi_1$, we denote by $[D]$ the corresponding element of ${\rm MW}(\Phi_1)$ with respect to the zero section $C_{21}$. 
\item We denote by $T(R) \in {\rm Aut}\, (X)$ the automorphism corresponding to $R \in {\rm MW}\, (\Phi_1)$.
\end{enumerate}

\medskip\noindent
Then the element $[C_{12}] \in {\rm MW}(\Phi_1)$ is a $2$-torsion, because
$$\langle [C_{12}], [C_{12}] \rangle = 2\cdot2 + 2\cdot2 - \frac{4(8-4)}{8} - \frac{4(8-4)}{8} =0$$
for the height pairing \cite[Theorem 8.6, Formula 8.10]{Sh90} and ${\rm MW}(\Phi_1) \simeq \Z^{\oplus 2} \oplus \Z/2$ by \cite[Theorem 2.1, Case
${\mathcal J}_1$]{Og89}. 
Set
$$\iota := T([C_{12}]) \circ \epsilon \in {\rm Aut}\, (X)\,\, .$$
Then $\iota$ is an involution on $X$ (\cite[Lemma 2.6]{Ko86}) such that $X^{\iota}$ consists of two elliptic curves corresponding to the multiple fibers of $\varphi_{M_{2}}$ by Assumption \ref{ass21}. Then, by \cite[Lemma 2.6]{Ko86} (see also \cite[Section 3]{HS11}), the Jacobian fibration $\varphi_1$ of $\varphi_{M_1}$ is given by
$$\varphi_1 : R_1 = X/\langle \iota \rangle \to \P^1/\langle \epsilon \rangle
\,\, . $$
Here $\P^1/\langle \epsilon \rangle$ is the quotient of the base space $\P^1$ of $\Phi_1$ on which $\epsilon$ acts equivariantly as an involution. Let us denote by $\pi_{R_1} : X \to R_1$ the quotient morphism and the fibers $\pi_{R_1}(N_1)$ by $N_{1, R_1}$ and $\pi_{R_1}(X_p)$ by $R_{1, \overline{p}}$.

We may and do identify both $X_{p}$ and $X_{\epsilon(p)}$ with $R_{1, \overline{p}}$ for general $p \in \P^1$ via $\pi_{R_1}$.

Since $\iota$, $T([C_{12}])$ and $\epsilon$ are involutions, we have
$$\iota := T([C_{12}]) \circ \epsilon = \epsilon \circ T([C_{12}])\,\, .$$
Also by the construction, we find that
$$\iota(C_{21}) = \epsilon \circ T([C_{12}])(C_{21}) = \epsilon (C_{12}) = C_{21}\,\, ,$$
i.e., preservation of the zero section $C_{21}$ under $\iota$. Therefore, $Q \in {\rm MW}\,(\Phi_1)$ is induced from some element $Q'\in {\rm MW}(\varphi_1)$ exactly when
$$\iota \circ T([Q]) = T([Q]) \circ \iota\,\, {\rm ,i.e.,}\,\, \iota \circ T([Q]) \circ  \iota = T([Q])\,\, .$$
\begin{lemma}\label{lem32}
\begin{enumerate}
\item $\iota(C_{11}) = C_2$ and $\iota(C_2) = C_{11}$.
\item $[C_{11}]+[C_2]\in {\rm MW}\,(\Phi_1)$ is induced from some element $Q'\in {\rm MW}(\varphi_1)$.
\end{enumerate}
\end{lemma}

\begin{proof} By preservation of the zero section $C_{21}$ under $\iota$, we obtain that
$$\iota \circ T([C_{11}]+[C_2]) \circ \iota (x) = \iota (\iota(x) + [C_{11}] + [C_2]) = x + [\iota(C_{11})] + [\iota(C_2)] \,\, ,$$
for any $x \in X_p$ on each smooth fiber $X_p$. Hence
$$\iota \circ T(([C_{11}]+[C_2])) \circ \iota = T([\iota(C_{11})] + [\iota(C_2)])\,\, .$$
So, the assertion (2) follows from the assertion (1). We show the assertion (1).
Note that the torsion group of ${\rm MW}\, (\Phi_1)$ is isomorphic to $\Z/2$ by \cite[Theorem 2.1, Case ${\mathcal J}_1$]{Og89}. In particular, the non-zero torsion element is only $[C_{12}]$.

If we choose $C_{11}$ (instead of $C_{21}$) as the zero section of $\Phi_1$, then, the height pairing of the section $C_{22}$ with respect to the zero section $C_{11}$ is computed as
$$\langle C_{22}, C_{22}\rangle = 2\cdot2 + 2\cdot2 - \frac{4(8-4)}{8} - \frac{4(8-4)}{8} =0\,\, .$$
Thus $[C_{22}] - [C_{11}]$ is a non-zero torsion element in ${\rm MW}\, (\Phi_1)$ and therefore coincides with $[C_{12}]$, i.e.,
$$[C_{22}] = [C_{11}] + [C_{12}]$$
in ${\rm MW}\, (\Phi_1)$.
Since $\iota = \epsilon \circ T([C_{12}])$, it follows that
$$\iota (C_{11}) = \epsilon \circ T([C_{12}])(C_{11}) = \epsilon (C_{22}) = C_{2}\,\, $$
as claimed. Then
$$\iota(C_2) = \iota (\iota(C_{11})) = C_{11}\,\, ,$$
as $\iota$ is an involution. This completes the proof of Lemma \ref{lem32}.
\end{proof}
Let $Q' \in {\rm MW}\,({\rm MW}(\varphi_1)$ be as in Lemma \ref{lem32}. Then $Q'$ induces an automorphism $f_2 \in {\rm Aut}\, (Z)$ preserving each fiber of $\varphi_{M_1}$. The action of $f_2$ on $M_1 \setminus {\rm Sing}\, M_1 = \C^{\times} \times \Z/8$ \cite[Page 604]{Ko63} is then the same action of $Q'$ on $N_{1, R} \setminus {\rm Sing}\, (N_{1, R})$ and therefore also the same action of $[C_{11}] + [C_2]$ on $N_1 \setminus {\rm Sing}\, N_1$ under the identifications of these
three fibers by $\pi$ and $\pi_{R_1}$.
Thus, representing points on $M_1 \setminus {\rm Sing}\, M_1 = \C^{\times} \times \Z/8$
by $(x, m\,{\rm mod}\, 8)$, we have by \cite[Theorem 9.1, Page 604]{Ko63}
$$f_2 : (x,m\,{\rm mod}\, 8) \mapsto (tx, m\,{\rm mod}\, 8) \mapsto (tx, m + 4 \,{\rm mod}\, 8)\,\, .$$
Here we recall that $C_{22} \cap E_2 = t$ (resp. $C_2 \cap F_2 = t$) with respect to the affine coordinate $x$ on $E_2$ (resp. $u$ on $F_2$). Hence
$f_2^2(H_2) = H_2$, $f_2^2(Q_{32}) = Q_{32}$ and
$$f_2^2(x) = t^2x$$
on $H_2$. Then
$$(f_2^2)^{-n} \circ f_1 \circ (f_2^2)^{n} \in {\rm Ine}\,(Z, Q_{32}, T_{Z, Q_{32}})$$
and
$$(f_2^2)^{-n} \circ f_1 \circ (f_2^2)^{n}|_{H_2} : x \mapsto t^{2n}x \mapsto t^{2n}x +a \mapsto x + t^{-2n}a\,\,$$
on $H_2$. Thus
$$t^{-2n}a = \rho((f_2^2)^{-n} \circ f_1 \circ (f_2^2)^{n}) \in {\rm Im}\, \rho\,\, ,$$
as claimed.
 This completes the proof.
\end{proof}

Theorem \ref{thm31}, hence Theorem \ref{thm1}, now follows from Propositions \ref{prop31} (2) and \ref{prop32} (2).


\end{document}